\newtheorem{theorem}{Theorem}[section]
\newtheorem{lemma}[theorem]{Lemma}
\newtheorem{proposition}[theorem]{Proposition}
\theoremstyle{definition}
\newtheorem{question}[theorem]{Question}
\numberwithin{equation}{subsection}
\newtheorem*{ack}{Acknowledgements}
\newcommand{\Z}{\mathbb{Z}}
\newcommand{\secref}[1]{Section~\ref{#1}}
\newcommand{\thmref}[1]{Theorem~\ref{#1}}
\begin{document}
\title[On some decompositions of the  3-strand Singular  Braid Group]{On some decompositions of the  3-strand Singular  Braid Group}
\author[K. Gongopadhyay]{Krishnendu Gongopadhyay}
\author[T. A. Kozlovskaya]{Tatyana A.  Kozlovskaya} 
\author[O. V. Mamonov]{Oleg V.  Mamonov}
\address{Indian Institute of Science Education and Research (IISER) Mohali,
 Knowledge City,  Sector 81, S.A.S. Nagar 140306, Punjab, India}
\email{krishnendu@iisermohali.ac.in}
\address{Regional Scientific and Educational Mathematical Center of Tomsk State University, 36 Lenin Ave., Tomsk, Russia.}
\email{t.kozlovskaya@math.tsu.ru}
\address{Novosibirsk State Agrarian University,
Dobrolyubova street, 160, Novosibirsk, 630039, Russia}
\email{mmnv20@yandex.ru}

\subjclass[2010]{ 20E07, 20F36, 57K12}
\keywords{Braid group, monoid of singular braids, singular pure braid group.}
\maketitle 

\begin{abstract}
Let $SB_n$ be the singular braid group generated by  braid generators $\sigma_i$ and singular braid generators $\tau_i$, $1 \leq i \leq n-1$. Let $ST_n$ denote the group that is the kernel of the homomorphism that maps, for each $i$,  $\sigma_i$ to the cyclic permutation $(i, i+1)$ and $\tau_i$ to $1$. In this paper we investigate the group $ST_3$. We obtain a presentation for $ST_3$. We prove that  $ST_3$  is isomorphic to the singular pure braid group $SP_3$ on $3$ strands. We also prove that the group $ST_3$ is semi-direct product of a subgroup $H$ and an infinite cyclic group, where  the subgroup $H$ is an  HNN-extension of $\Z^2 \ast \Z^2$. 
\end{abstract}


\section{Introduction}
The notion of singular braids was introduced independently by Baez in \cite{bae} and Birman in \cite{bir}. The set of all such braids has a monoid structure.  It was shown in \cite{fkr} that the Baez-Birman monoid on $n$ strands is embedded in a group which is denoted by $SB_n$. The group $SB_n$ is now known as the \emph{singular braid group} on $n$ strands. The group $SB_n$ contains the classical braid group $B_n$ as a subgroup.  The \emph{singular braid group} $SB_n$ is generated by a set of $2(n-1)$ generators: $\{  \sigma_i,~ \tau_i ~ | ~ i=1, 2, \ldots, n-1  \}$, where $\sigma_i$  satisfy the usual braid relations: 
$$\sigma_i \sigma_j=\sigma_j \sigma_i, \hbox{ if } |i-j|>1;$$
	$$\sigma_i \sigma_{i+1} \sigma_i=\sigma_{i+1} \sigma_i \sigma_{i+1},$$ 
and  $\tau_i$ satisfy the commuting relations: 
$$\tau_i \tau_j = \tau_j \tau_i, \hbox{ if } |i-j|>1;$$
and in addition there are the following mixed relations among $\sigma_i$, $\tau_i$: 
		 \begin{equation}  \label{mm2}  \sigma_{i+1} \sigma_i \tau_{i+1} =\tau_i \sigma_{i+1} \sigma_i;\end{equation}
	 \begin{equation}  \label{mm3}  \sigma_i \sigma_{i+1} \tau_i =\tau_{i+1} \sigma_i \sigma_{i+1}.\end{equation}
 \begin{equation} \label{mm1} \tau_i \sigma_j = \sigma_j \tau_i,  \hbox{ if } i=j \hbox{ or } |i-j|>1; \end{equation}
The generators include the standard  braids $\sigma_i$ and  braids $\tau_i$ (see Fig. \ref{1ab}, \ref{1c}).

\begin{figure}[h]
\includegraphics[totalheight=2.5cm]{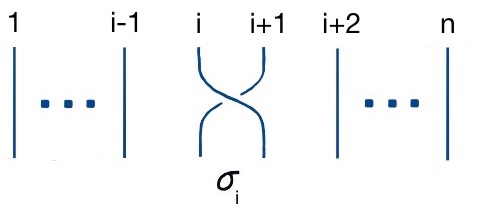} \,\,\,\,\,\,\,\,
\includegraphics[totalheight=2.5cm]{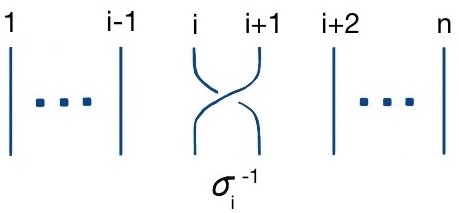}
\caption{The elementary braids $\sigma_i$ and  $\sigma_i^{-1}$} \label{1ab}
\end{figure}

\begin{figure}[h]
\centering{
\includegraphics[totalheight=2.5cm]{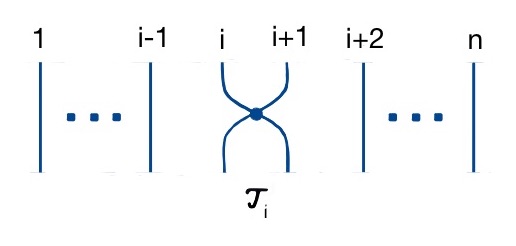}\\
\caption{The singular  braids $\tau_i$} \label{1c}
}
\end{figure}

Singular braids are related to finite type invariants of knots and links. It is a natural problem to investigate their algebraic and geometric properties to understand these invariants. The word problem for $SB_3$ was solved in \cite{j}, \cite{dg1}. For arbitrary $n$, it follows from the work of Corran \cite{co} or Godelle and Paris \cite{pa}. For more information on generalised braids and singular braid groups, we refer to the survey \cite{v}. 

\medskip In \cite{dg2}, Dasbach and  Gemein introduced the singular pure braid group $SP_n$ that is a generalization of the (classical) pure braid group $P_n$. The group $SP_n$ is the kernel of the natural surjective homomorphism that maps, for each $i$,  $\sigma_i$ and $\tau_i$ to the cyclic permutation $(i, i+1)$.  Dasbach and Gemein found a set of generators and defining relations for $SP_n$ and established that this group can be constructed using consecutive HNN extensions. Recently, Bardakov and Kozlovskaya \cite{bk}  revisited $SP_3$ and obtained another presentation for it that decomposes $SP_3$ as a direct product of two groups.

For the virtual braid group $VB_n$ people study  the kernels of two homomorphisms: $\varphi_1, \varphi_2 : VB_n \to S_n$. The first is defined by the rules
$$
\varphi_1(\sigma_i) = \varphi_1(\rho_i) = (i, i+1), ~~i = 1, 2, \ldots, n-1,
$$
and the kernel  $Ker(\varphi_1)$ is called the \emph{virtual pure braid group} and is denoted $VP_n$. This group was introduced in \cite{B}.  The second homomorphism is defined by the rules
$$
\varphi_2(\sigma_i) = e,~~\varphi_2(\rho_i) = (i, i+1), ~~i = 1, 2, \ldots, n-1,
$$
and the kernel  $Ker(\varphi_2)$ is called the \emph{Rabenda group} and is denoted $VR_n$. This group was introduced in \cite{R}. 
 In \cite{BB}, it was proved that the group $VP_n$ is not isomorphic to $VR_n$ for $n \geq 3$.

 Consider the homomorphism
$$
\pi : SB_n \longrightarrow S_n
$$
of $SB_n$ onto the symmetric group $S_n$ on $n$ symbols by actions on the  generators
\begin{center}
$\pi(\sigma_i) =s_i=(i, i+1)$, $i=1, 2, \ldots, n-1$, 
$\pi(\tau_j) = 1$, $j=1, 2, \ldots, n-1$. 
\end{center}
Hence, we have decomposition
$$
1 \to \mbox{Ker}(\pi) \to SB_n \to S_n \to 1.
$$
Denote by $ST_n$ the kernel $\mbox{Ker}(\pi)$. So, the group $ST_n$ may be thought of as an opposite analogue of the Rabenda group.

In this paper, we obtain a presentation for the group $ST_3$. Using this presentation, we prove that the group $ST_3$ is a semi-direct product of a subgroup $H$ and an infinite cyclic group, where  the subgroup $H$ is an  HNN-extension of $\Z^2 \ast \Z^2$.  Further, by comparing the presentation of $ST_3$ and that of $SP_3$ obtained in \cite{bk} we have the following. 
\begin{theorem}\label{th1}  
The group $ST_3$ is isomorphic to the  singular pure braid group $SP_3$. 
\end{theorem}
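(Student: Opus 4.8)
The framing preceding the theorem suggests comparing the presentation of $ST_3$ obtained earlier with the presentation of $SP_3$ from \cite{bk}, and that is certainly one route. My plan, however, is to exhibit the isomorphism conceptually, by producing a single automorphism of $SB_3$ that carries one kernel onto the other; the presentation comparison then becomes a bookkeeping verification rather than the heart of the matter.

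Concretely, I would define a map $\phi \colon SB_n \to SB_n$ on generators by $\phi(\sigma_i) = \sigma_i$ and $\phi(\tau_i) = \sigma_i \tau_i$, and check that it respects every defining relation of $SB_n$. The braid relations and the far-commutativity relations are immediate, since they involve only the $\sigma_i$ (which are fixed) or $\tau_i$'s that are far apart. The relation $\tau_i \sigma_i = \sigma_i \tau_i$ transforms into $\sigma_i \tau_i \sigma_i = \sigma_i^2 \tau_i$, which cancels back to $\tau_i \sigma_i = \sigma_i \tau_i$. The only genuinely interesting checks are the mixed relations \eqref{mm2} and \eqref{mm3}: here one substitutes $\phi(\tau_i) = \sigma_i \tau_i$ and uses the braid relation $\sigma_i \sigma_{i+1} \sigma_i = \sigma_{i+1} \sigma_i \sigma_{i+1}$ to reduce the image of each relation back to the original relation. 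Thus $\phi$ is a well-defined endomorphism; the map $\sigma_i \mapsto \sigma_i$, $\tau_i \mapsto \sigma_i^{-1} \tau_i$ passes the same tests and is a two-sided inverse, so $\phi$ is in fact an automorphism of $SB_n$.

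The payoff is that $\phi$ intertwines the two kernel-defining homomorphisms. Writing $\rho$ for the homomorphism defining $SP_n$ (so $\rho(\sigma_i) = \rho(\tau_i) = (i,i+1)$), one computes $\rho(\phi(\sigma_i)) = (i,i+1)$ and $\rho(\phi(\tau_i)) = (i,i+1)(i,i+1) = 1$, whence $\rho \circ \phi = \pi$. Consequently $ST_n = \operatorname{Ker}(\pi) = \phi^{-1}(\operatorname{Ker}(\rho)) = \phi^{-1}(SP_n)$, and since $\phi$ is an automorphism it restricts to an isomorphism $\phi|_{ST_3} \colon ST_3 \to SP_3$ (indeed this argument gives $ST_n \cong SP_n$ for every $n$).

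The main obstacle I anticipate is purely in the relation checking for $\phi$: the mixed relations \eqref{mm2}--\eqref{mm3} are the delicate point, and one must verify that the substitution together with the braid relation recovers \emph{exactly} the original relation, not merely something equivalent after further manipulation. If instead one follows the paper's stated route of matching the two explicit presentations, the difficulty shifts to identifying the correct dictionary between the two generating sets---which arise from two different coset computations---and then confirming via Tietze transformations that the relator sets coincide; the automorphism $\phi$ above is, in effect, what makes that dictionary canonical, so I would use it to guide the comparison even if I ultimately wrote the proof in the presentation language of \cite{bk}.
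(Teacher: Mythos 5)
Your proof is correct, and it takes a genuinely different route from the paper. The paper works entirely at the level of explicit presentations: it applies the Reidemeister--Schreier method to get a presentation of $ST_3$ on the generators $a_{ij}, c_{ij}$ (Theorem \ref{t1}), takes the presentation of $SP_3$ from \cite{bk}, observes that the two relator sets differ in exactly one relation, and shows by conjugating that relation and using $[a_{23},c_{23}]=1$ that the discrepancy is a Tietze-equivalent rewriting; the isomorphism is then $a_{ij}\mapsto a_{ij}$, $c_{ij}\mapsto b_{ij}$. Your argument bypasses the presentations entirely: the substitution $\sigma_i\mapsto\sigma_i$, $\tau_i\mapsto\sigma_i\tau_i$ does respect all defining relations of $SB_n$ (I checked the mixed relations \eqref{mm2}--\eqref{mm3}: for instance both sides of the image of \eqref{mm2} reduce to $\tau_i\sigma_{i+1}\sigma_i\sigma_{i+1}$, using the braid relation \emph{together with} $\sigma_j\tau_j=\tau_j\sigma_j$ and \eqref{mm2} itself --- so the reduction is to a consequence of the relations, which is all that is needed), the map $\tau_i\mapsto\sigma_i^{-1}\tau_i$ is a two-sided inverse, and the identity $\rho\circ\phi=\pi$ on generators forces $\phi(ST_n)=SP_n$. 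What each approach buys: the paper's computation produces the explicit presentation of $ST_3$ that is needed anyway for the semidirect-product and HNN decompositions in \secref{pd}, so the presentation comparison comes almost for free there but is confined to $n=3$; your automorphism argument is shorter, conceptual, and uniform in $n$, and in fact it answers affirmatively the Question posed in the introduction (whether $SP_n\cong ST_n$ for $n>3$), which the paper leaves open. The one place to be scrupulous if you write this up is the well-definedness check of $\phi$ on \eqref{mm2}--\eqref{mm3}; your sketch says ``uses the braid relation,'' but the verification also invokes the commutations $\sigma_i\tau_i=\tau_i\sigma_i$ and the mixed relations themselves, so spell those steps out.
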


\medskip 
We prove this theorem in \secref{pd}. The semidirect decomposition has also been proved in this section.  This result rely on a presentation for $ST_3$, see \thmref{t1}, that is obtained by using the Reidemeister-Schreier method in \secref{pf}. 

\medskip In the general case we can formulate

\begin{question}
Is it true that $SP_n$ is isomorphic to $ST_n$ for $n > 3$.
\end{question}

\section{Reidemeister-Schreier Algorithm} Given a presentation of a group $G$, this algorithm allows one to find a presentation of a subgroup $H\subset G$. To obtain the presentation of $H$,   it is necessary to find a  Schreier's set of right coset of the group $G$ over the subgroup $H$. We briefly recall the algorithm. 
Let $a_1,\ldots,a_n$ be the generators of the group $G$ and $R_1,\ldots,R_m$ be
the set of defining relations for the given set of generators.
A set of words $N=\left\{ K_\alpha,\, \alpha\in A  \right\}$
on generators $a_1,\ldots,a_n$  defines a Schreier's system for the subgroup $H\subset G$
relative to the system of generators $a_1,\ldots,a_n$
if the following conditions are satisfied:

\medskip 1) There is only one word of $N$ from every right coset of the group $G$ over $H$. 

\medskip 2) If the word
$K_\alpha=a_{i_1}^{\varepsilon_1}\ldots a_{i_{p-1}}^{\varepsilon_{p-1}}a_{i_p}^{\varepsilon_p}$,
$(\varepsilon_j=\pm 1)$ lies in $N$,
then the word $a_{i_1}^{\varepsilon_1}\ldots a_{i_{p-1}}^{\varepsilon_{p-1}}$
also lies in $N$.

\medskip Suppose that some
Schreier's system $N$ is chosen for the subgroup $H\subset G$
relative to the system generators $a_1,\ldots,a_n$ of $G$.
For every word $Q$ on $a_1,\ldots,a_n$, we denote by $\overline{Q}$
the only word from $N$ which lies in the same right coset of $G$ over the subgroup $H$.
Denote
$$
S_{K_\alpha, a_\nu}=K_\alpha a_\nu \cdot (\overline{K_\alpha a_\nu})^{-1},
\quad \alpha\in A,\,\,\nu=1,\ldots,n.
$$
Theorem of Reidemeister-Schreier states that the elements $S_{K_\alpha, a_\nu}$
generate subgroup $H$ and
the set of defining relations for this set of generators
is given by the following. First set of relation consists of trivial relations $S_{K_\alpha, a_\nu}=1$,
where the pair $K_\alpha$, $a_\nu$ is such that the word
$K_\alpha a_\nu \cdot (\overline{K_\alpha a_\nu})^{-1}$
is freely equivalent to the word 1.
Second set of relations consists of all relations of the form $\tau(K_\alpha R_\mu K_\alpha^{-1})$,
where $\alpha\in A$, $\mu=1,\ldots,m$, and $\tau$ is Reidemeister's transformation,
which  maps every nonempty word
$a_{i_1}^{\varepsilon_1}\ldots a_{i_p}^{\varepsilon_p}$, $(\varepsilon_j=\pm 1)$
from symbols $a_1,\ldots,a_n$ to the word from symbols $S_{K_\alpha, a_\nu}$
by the rule:
$$
\tau (a_{i_1}^{\varepsilon_1}\ldots a_{i_p}^{\varepsilon_p})=
 S_{K_{i_1}, a_{i_1}}^{\varepsilon_1} \ldots S_{K_{i_p}, a_{i_p}}^{\varepsilon_p},
$$
where $K_{i_j}=\overline{a_{i_1}^{\varepsilon_1}\ldots a_{i_{j-1}}^{\varepsilon_{j-1}}}$,
if $\varepsilon_j=1$, and
$K_{i_j}=\overline{a_{i_1}^{\varepsilon_1}\ldots a_{i_{j}}^{\varepsilon_{j}}}$,
if $\varepsilon_j=-1$.

\section{Presentation of  $ST_3$} \label{pf} 
The group  $SB_3$ is generated by elements
$$
\sigma_1, \sigma_2, \tau_1, \tau_2,
$$
and is defined by relations
$$
\sigma_1 \tau_1 = \tau_1 \sigma_1,~~~\sigma_1 \sigma_2 \sigma_1 = \sigma_2 \sigma_1 \sigma_2,~~~\sigma_2 \tau_2 = \tau_2 \sigma_2,~~~
\sigma_1 \sigma_2 \tau_1 = \tau_2 \sigma_1 \sigma_2,~~~\sigma_2 \sigma_1 \tau_2 = \tau_1 \sigma_2 \sigma_1.
$$
The set of coset representatives:
$$
\Lambda_3 = \{ 1, \sigma_1, \sigma_2, \sigma_1 \sigma_2, \sigma_2 \sigma_1, \sigma_1 \sigma_2 \sigma_1  \}.
$$
The group $ST_3$ is generated by elements
$$
S_{\lambda,a} = \lambda a \cdot (\overline{\lambda a})^{-1},~~~\lambda \in \Lambda_3,~~a \in \{ \sigma_1, \sigma_2, \tau_1, \tau_2 \}.
$$
Find these elements
\begin{align*}
& S_{1,\sigma_1} = \sigma_1 \cdot (\overline{\sigma_1})^{-1} = \sigma_1 \cdot \sigma_1^{-1} = 1,\\
& S_{1,\sigma_2} = \sigma_2 \cdot (\overline{\sigma_2})^{-1} = \sigma_2 \cdot \sigma_2^{-1} = 1,\\
& S_{1,\tau_1} = \tau_1 \cdot (\overline{\tau_1})^{-1} = \tau_1 ,\\
& S_{1,\tau_2} = \tau_2 \cdot (\overline{\tau_2})^{-1} = \tau_2 ,\\
\end{align*}
\begin{align*}
& S_{\sigma_1,\sigma_1} = \sigma_1^2 \cdot \overline{\sigma_1^2}^{-1} = \sigma_1^2 \cdot 1 = \sigma_1^2,\\
& S_{\sigma_1,\sigma_2} = \sigma_1 \sigma_2 \cdot (\overline{\sigma_1 \sigma_2})^{-1} = 1,\\
& S_{\sigma_1,\tau_1} = \sigma_1 \tau_1 \cdot (\overline{\sigma_1 \tau_1})^{-1} =  \tau_1,\\
& S_{\sigma_1,\tau_2} = \sigma_1 \tau_2 \cdot (\overline{\sigma_1 \tau_2})^{-1} = \sigma_1 \tau_2  \sigma_1^{-1},\\
\end{align*}
\begin{align*}
& S_{\sigma_2,\sigma_1} = \sigma_2 \sigma_1 \cdot (\overline{\sigma_2 \sigma_1})^{-1} = 1,\\
& S_{\sigma_2,\sigma_2} = \sigma_2^2 \cdot \overline{\sigma_2^2}^{-1} = \sigma_2^2 \cdot 1 = \sigma_2^2,\\
& S_{\sigma_2,\tau_1} = \sigma_2 \tau_1 \cdot (\overline{\sigma_2 \sigma_1})^{-1} = \sigma_2 \tau_1  \sigma_2^{-1},\\
& S_{\sigma_2,\tau_2} = \sigma_2 \tau_2 \cdot (\overline{\sigma_2 \tau_2})^{-1} =  \tau_2,\\
\end{align*}
\begin{align*}
& S_{\sigma_1 \sigma_2,\sigma_1} = \sigma_1 \sigma_2 \sigma_1 \cdot (\sigma_1 \sigma_2 \sigma_1)^{-1} = 1,\\
& S_{\sigma_1 \sigma_2,\sigma_2} = \sigma_1 \sigma_2^2 \sigma_1^{-1},\\
& S_{\sigma_1 \sigma_2,\tau_1} = \sigma_1 \sigma_2 \tau_1 \sigma_2^{-1} \sigma_1^{-1}=\tau_2,\\
&S_{\sigma_1 \sigma_2,\tau_2} = \sigma_1 \tau_2 \sigma_1^{-1},\\
\end{align*}
\begin{align*}
& S_{\sigma_2 \sigma_1,\sigma_1} = \sigma_2 \sigma_1^2 \sigma_2^{-1},\\
& S_{\sigma_2 \sigma_1,\sigma_2} = \sigma_2 \sigma_1 \sigma_2 \sigma_1^{-1} \sigma_2^{-1} \sigma_1^{-1}=1,\\
& S_{\sigma_2 \sigma_1,\tau_1} = \sigma_2  \tau_1 \sigma_2^{-1},\\
& S_{\sigma_2 \sigma_1,\tau_2} = \sigma_2 \sigma_1 \tau_2 \sigma_1^{-1} \sigma_2^{-1}=\tau_1,\\
\end{align*}
\begin{align*}
& S_{\sigma_1 \sigma_2 \sigma_1, \sigma_1} = \sigma_1 \sigma_2 \sigma_1^2 \sigma_2^{-1} \sigma_1^{-1}=\sigma_2^{2} ,\\
& S_{\sigma_1 \sigma_2 \sigma_1, \sigma_2} = \sigma_1 \sigma_2 \sigma_1 \sigma_2 \sigma_1^{-1} \sigma_2^{-1}=\sigma_1^{2},\\
& S_{\sigma_1 \sigma_2 \sigma_1, \tau_1} = \sigma_1 \sigma_2 \tau_1 \sigma_2^{-1} \sigma_1^{-1}=\tau_2,\\
& S_{\sigma_1 \sigma_2 \sigma_1, \tau_2} = \sigma_1 \sigma_2 \sigma_1 \tau_2 \sigma_1^{-1} \sigma_2^{-1} \sigma_1^{-1}=\tau_1.\\
\end{align*}

Find the set of defining relations.

\begin{lemma} \label{l1}
From relation $r_1 = \sigma_1 \tau_1 \sigma_1^{-1} \tau_1^{-1}$ follows 6 relations, applying which we can remove generators:
$$S_{\sigma_1,\tau_1} = S_{1,\tau_1}; 
S_{\sigma_2\sigma_1, \tau_1} = S_{\sigma_2, \tau_1};
S_{\sigma_1 \sigma_2 \sigma_1,\tau_1} =S_{\sigma_1 \sigma_2, \tau_1},
$$

and we get 3 relations:

$$S_{\sigma_1,\sigma_1} S_{\sigma_1,\tau_1} =S_{\sigma_1, \tau_1}S_{\sigma_1, \sigma_1}, $$
$$S_{\sigma_2 \sigma_1, \sigma_1}S_{\sigma _2,\tau _1}= S_{\sigma_2, \tau_1}S_{\sigma _2\sigma _1, \sigma _1}, $$
$$S_{\sigma_1 \sigma_2 \sigma_1, \sigma_1}S_{\sigma_1 \sigma_2,\tau_1} = S_{\sigma_1 \sigma_2, \tau_1} S_{\sigma_1 \sigma_2 \sigma_1, \sigma_1}.$$

\end{lemma}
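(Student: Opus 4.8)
The plan is to apply the Reidemeister–Schreier machinery of Section~2 directly to the single relator $r_1 = \sigma_1\tau_1\sigma_1^{-1}\tau_1^{-1}$, which is the relator form of the defining relation $\sigma_1\tau_1 = \tau_1\sigma_1$ of $SB_3$. By the Reidemeister–Schreier theorem, each defining relator of $SB_3$ contributes one relation $\tau(\lambda\, r_1\, \lambda^{-1}) = 1$ of $ST_3$ for every coset representative $\lambda \in \Lambda_3$. Since $\Lambda_3$ has six elements, the relator $r_1$ produces exactly the six relations asserted; the task is to compute them explicitly and then simplify.

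First, for each $\lambda \in \Lambda_3$ I would write out $\lambda\, r_1\, \lambda^{-1}$ as a word in $\sigma_1,\sigma_2,\tau_1,\tau_2$ and run the rewriting process $\tau$ across it letter by letter. At the $j$-th letter $a_{i_j}^{\varepsilon_j}$ one records the factor $S_{K_{i_j}, a_{i_j}}^{\varepsilon_j}$, where $K_{i_j}$ is the coset representative of the relevant prefix (including the current letter precisely when $\varepsilon_j = -1$, as in the rule of Section~2). The representative of any prefix is obtained by computing its image under $\pi$ in $S_3$ and reading off the unique element of $\Lambda_3$ lying over it, using that $\pi$ restricts to a bijection $\Lambda_3 \to S_3$; since $\pi(\tau_i)=1$, only the $\sigma$-letters affect these representatives.

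Next I would substitute the values of the Schreier generators already tabulated. In particular the six generators
$$
S_{1,\sigma_1},\ S_{1,\sigma_2},\ S_{\sigma_1,\sigma_2},\ S_{\sigma_2,\sigma_1},\ S_{\sigma_1\sigma_2,\sigma_1},\ S_{\sigma_2\sigma_1,\sigma_2}
$$
are trivial, so in each rewritten word most factors drop out. The three representatives $\lambda \in \{1,\ \sigma_2,\ \sigma_1\sigma_2\}$ leave a relation of the shape $S_{\mu,\tau_1}\,S_{\nu,\tau_1}^{-1} = 1$, that is, an identification $S_{\mu,\tau_1} = S_{\nu,\tau_1}$; these are the three removal relations. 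The remaining three representatives $\lambda \in \{\sigma_1,\ \sigma_2\sigma_1,\ \sigma_1\sigma_2\sigma_1\}$ leave a relation of the shape $S_{\mu,\sigma_1}\,S_{\nu,\tau_1}\,S_{\mu,\sigma_1}^{-1}\,S_{\rho,\tau_1}^{-1} = 1$; substituting the identification $S_{\nu,\tau_1} = S_{\rho,\tau_1}$ coming from the matching removal relation turns this into the stated commutation relation $S_{\mu,\sigma_1}\,S_{\nu,\tau_1} = S_{\nu,\tau_1}\,S_{\mu,\sigma_1}$.

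The computation is entirely mechanical, so the only real difficulty is bookkeeping: tracking the image in $S_3$ of each prefix in order to pick the correct representative $K_{i_j}$, and remembering to feed the removal relations (coming from the shorter cosets $1,\sigma_2,\sigma_1\sigma_2$) back in when simplifying the relations coming from the longer cosets $\sigma_1,\sigma_2\sigma_1,\sigma_1\sigma_2\sigma_1$. This final substitution is exactly what converts the raw output of $\tau$ into the clean commutator form displayed in the statement.
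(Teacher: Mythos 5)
Your proposal is correct and follows essentially the same route as the paper: the paper's proof likewise computes $\tau(\lambda\,r_1\,\lambda^{-1})$ for each of the six coset representatives, finds that $\lambda\in\{1,\sigma_2,\sigma_1\sigma_2\}$ yield the three identifications and $\lambda\in\{\sigma_1,\sigma_2\sigma_1,\sigma_1\sigma_2\sigma_1\}$ yield words of the form $S_{\mu,\sigma_1}S_{\nu,\tau_1}S_{\mu,\sigma_1}^{-1}S_{\rho,\tau_1}^{-1}$, and then substitutes the identifications to obtain the three commutation relations. Your description is a plan rather than the worked computation, but the bookkeeping you outline is exactly what the paper carries out.
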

\begin{proof} Take the relation $r_1 = \sigma_1 \tau_1 \sigma_1^{-1} \tau_1^{-1}$. 
From this, we get the following relations.  
$$\tau(r_1) = S_{1,\sigma_1} S_{\sigma_1,\tau_1}  S_{1,\sigma_1}^{-1} S_{1,\tau_1}^{-1}=S_{\sigma_1,\tau_1}  S_{1,\tau_1}^{-1}=1,$$
this implies, 
$$S_{\sigma_1,\tau_1} = S_{1,\tau_1}$$

$$
r_{1,\sigma_1} = \tau(\sigma_1 r_1 \sigma_1^{-1})= S_{1,\sigma_1} S_{\sigma_1,\sigma_1} S_{1, \tau_1} S_{\sigma_1, \sigma_1}^{-1} S_{\sigma_1, \tau_1}^{-1} S_{1,\sigma_1}^{-1} =  S_{\sigma_1,\sigma_1} S_{1, \tau_1} S_{\sigma_1 \sigma_1}^{-1} S_{\sigma_1, \tau_1}^{-1} = 1,
$$
this implies, 
$$
S_{\sigma_1,\sigma_1} S_{1, \tau_1} =S_{\sigma_1, \tau_1}S_{\sigma_1, \sigma_1} 
$$ 

$$
r_{1,\sigma_2} =\tau( \sigma_2 r_1 \sigma_2^{-1}) = S_{1,\sigma_2} S_{\sigma_2,\sigma_1} S_{\sigma_2 \sigma_1, \tau_1} S_{\sigma_2, \sigma_1}^{-1} S_{\sigma_2, \tau_1}^{-1} S_{1,\sigma_2}^{-1} =  S_{\sigma_2\sigma_1, \tau_1}  S_{\sigma_2, \tau_1}^{-1} = 1,
$$
this implies, 
$$
S_{\sigma_2\sigma_1, \tau_1} = S_{\sigma_2, \tau_1}. 
$$

$$
r_{1,\sigma_1\sigma_2} =  \tau(\sigma_1 \sigma_2 r_1 \sigma_2^{-1} \sigma_1^{-1}) = S_{1,\sigma_1} S_{\sigma_1,\sigma_2} S_{\sigma_1 \sigma_2, \sigma_1} S_{\sigma_1 \sigma_2 \sigma_1,\tau_1} S_{\sigma_1 \sigma_2, \sigma_1}^{-1} S_{\sigma_1 \sigma_2, \tau_1}^{-1} S_{\sigma_1, \sigma_2}^{-1} S_{1,\sigma_1}^{-1} =
$$
$$
= S_{\sigma_1 \sigma_2 \sigma_1,\tau_1}  S_{\sigma_1 \sigma_2, \tau_1}^{-1}  = 1,
$$
this implies, 
$$
S_{\sigma_1 \sigma_2 \sigma_1,\tau_1} =S_{\sigma_1 \sigma_2, \tau_1}.
$$
From the relation
$${r_{1,{\sigma _2}{\sigma _1}}} =\tau( \sigma_2 \sigma_1 r_1 \sigma_1^{-1} \sigma_2^{-1}) = S_{1,\sigma_2} S_{\sigma_2,\sigma_1} S_{\sigma_2 \sigma_1, \sigma_1} S_{\sigma_2 ,\tau_1} S_{\sigma_2 \sigma_1, \sigma_1}^{-1} S_{\sigma_2 \sigma_1, \tau_1}^{-1} S_{\sigma_2, \sigma_1}^{-1} S_{1,\sigma_2}^{-1} =$$

$$=S_{\sigma_2 \sigma_1, \sigma_1}S_{\sigma_2 ,\tau_1} S_{\sigma_2 \sigma_1, \sigma_1}^{-1} S_{\sigma_2 \sigma_1, \tau_1}^{-1} =1,$$
it follows that

$$S_{\sigma_2 \sigma_1, \sigma_1}S_{\sigma _2,\tau _1}= S_{\sigma _2\sigma _1,\tau _1}S_{\sigma _2\sigma _1, \sigma _1}.$$

Relation
$$
r_{1,\sigma_1\sigma_2\sigma_1} =\tau( \sigma_1 \sigma_2 \sigma_1 r_1 \sigma_1^{-1} \sigma_2^{-1} \sigma_1^{-1}) =
S_{1,\sigma_1} S_{\sigma_1,\sigma_2} S_{\sigma_1 \sigma_2, \sigma_1} S_{\sigma_1 \sigma_2 \sigma_1, \sigma_1}
  S_{\sigma_1 \sigma_2,\tau_1} \cdot
  $$
$$
\cdot  S_{\sigma_1 \sigma_2 \sigma_1, \sigma_1}^{-1} S_{\sigma_1 \sigma_2 \sigma_1, \tau_1}^{-1} S_{\sigma_1 \sigma_2, \sigma_1}^{-1} S_{\sigma_1, \sigma_2}^{-1}  S_{1,\sigma_1}^{-1} =
  S_{\sigma_1 \sigma_2 \sigma_1, \sigma_1}
  S_{\sigma_1 \sigma_2,\tau_1} S_{\sigma_1 \sigma_2 \sigma_1, \sigma_1}^{-1} S_{\sigma_1 \sigma_2 \sigma_1, \tau_1}^{-1} = 1,
$$

\medskip 
we get the relation
$$
S_{\sigma_1 \sigma_2 \sigma_1, \sigma_1}
  S_{\sigma_1 \sigma_2,\tau_1} = S_{\sigma_1 \sigma_2 \sigma_1, \tau_1} S_{\sigma_1 \sigma_2 \sigma_1, \sigma_1}.
$$
Now the lemma follows. 
\end{proof}

\begin{lemma} \label{l2}
From relation $r_2 = \sigma_1 \sigma_2 \sigma_1 \sigma_2^{-1} \sigma_1^{-1} \sigma_2^{-1}$ follows 6 relations, applying which we can remove 3 generators:
$$S_{\sigma_2 \sigma_1, \sigma_2} = 1,~~~ S_{\sigma_1, \sigma_1}=S_{\sigma_1 \sigma_2 \sigma_1, \sigma_2}, ~~~S_{\sigma_1 \sigma_2 \sigma_1, \sigma_1} = S_{\sigma_2, \sigma_2},
$$
and we get relations:
$${S_{{\sigma _1},{\sigma _1}}}{S_{\sigma _2\sigma _1,\sigma _1}} = {S_{\sigma _1\sigma _2,\sigma _2}}{S_{\sigma _1,\sigma _1}},$$

$${S_{\sigma _2\sigma _1,\sigma _1}}{S_{\sigma _2,\sigma _2}} = {S_{\sigma _2,\sigma _2}} {S_{\sigma _1 \sigma _2,\sigma _2}},$$

$${S_{{\sigma _2},{\sigma _2}}}{S_{{\sigma _1}{\sigma _2},{\sigma _2}}}{S_{\sigma _1 \sigma _1}}= {S_{{\sigma _1},{\sigma _1}}}{S_{{\sigma _2}{\sigma _1},{\sigma _1}}}{S_{{\sigma _2},{\sigma _2}}}.$$
\end{lemma}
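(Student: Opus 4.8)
The plan is to mimic exactly the computation carried out for \lemref{l1}, now applied to the braid relation $r_2 = \sigma_1\sigma_2\sigma_1\sigma_2^{-1}\sigma_1^{-1}\sigma_2^{-1}$. Since $\Lambda_3$ is a full set of coset representatives, the second (non-trivial) family of Reidemeister--Schreier relations coming from $r_2$ is obtained by rewriting the six conjugates $\tau(\lambda r_2 \lambda^{-1})$, with $\lambda$ ranging over $\Lambda_3 = \{1, \sigma_1, \sigma_2, \sigma_1\sigma_2, \sigma_2\sigma_1, \sigma_1\sigma_2\sigma_1\}$. Each such conjugate is a word in $\sigma_1^{\pm1}, \sigma_2^{\pm1}$, and applying $\tau$ to it produces a single relation in the generators $S_{\lambda, a}$; the six relations so obtained are exactly the six asserted in the statement.

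The mechanical core is the letter-by-letter rewriting. For a positive letter $\sigma_i$ occurring after a prefix $w$ I attach the generator $S_{\overline{w}, \sigma_i}$, and for a negative letter $\sigma_i^{-1}$ I attach $S_{\overline{w\sigma_i^{-1}}, \sigma_i}^{-1}$, where in both cases the representative $\overline{(\cdot)}$ is read off from the image in $S_3$ under $\pi$ (recall $\pi(\sigma_i)=s_i$, so $\overline{w}$ is determined by the permutation $\pi(w)$). After each rewriting I simplify using the generators already found to be trivial, namely $S_{1,\sigma_1}=S_{1,\sigma_2}=S_{\sigma_1,\sigma_2}=S_{\sigma_2,\sigma_1}=S_{\sigma_1\sigma_2,\sigma_1}=S_{\sigma_2\sigma_1,\sigma_2}=1$. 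For instance, $\tau(r_2)$ collapses to $S_{\sigma_2\sigma_1,\sigma_2}^{-1}=1$, which is the first removal relation, while $\tau(\sigma_1 r_2 \sigma_1^{-1})$ collapses to $S_{\sigma_1,\sigma_1}\, S_{\sigma_1\sigma_2\sigma_1,\sigma_2}^{-1}=1$, giving $S_{\sigma_1,\sigma_1}=S_{\sigma_1\sigma_2\sigma_1,\sigma_2}$.

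Carrying this out for the remaining four representatives $\sigma_2,\ \sigma_1\sigma_2,\ \sigma_2\sigma_1,\ \sigma_1\sigma_2\sigma_1$ yields the third removal relation $S_{\sigma_1\sigma_2\sigma_1,\sigma_1}=S_{\sigma_2,\sigma_2}$ together with the three retained relations displayed in the statement. The removal relations are precisely those that identify one generator with another (or with $1$), and substituting them back eliminates $S_{\sigma_2\sigma_1,\sigma_2}$, $S_{\sigma_1\sigma_2\sigma_1,\sigma_2}$ and $S_{\sigma_1\sigma_2\sigma_1,\sigma_1}$ from the generating set. I expect the only genuine source of difficulty to be the bookkeeping of coset representatives through the negative exponents---specifically, remembering to include the current letter in the prefix when $\varepsilon_j=-1$---since an error there would misattribute a generator; the algebra itself is routine once the representatives are correctly assigned.
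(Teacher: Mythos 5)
Your proposal is correct and follows essentially the same route as the paper: rewriting the six conjugates $\tau(\lambda r_2\lambda^{-1})$ for $\lambda\in\Lambda_3$ via the Reidemeister--Schreier rule, cancelling the trivial generators $S_{1,\sigma_1},S_{1,\sigma_2},S_{\sigma_1,\sigma_2},S_{\sigma_2,\sigma_1},S_{\sigma_1\sigma_2,\sigma_1},S_{\sigma_2\sigma_1,\sigma_2}$, and then using the three identification relations to eliminate generators. The two sample computations you display agree with the paper's, and the remaining four are the same routine bookkeeping the paper carries out.
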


\begin{proof}
 Take the relation $r_2 = \sigma_1 \sigma_2 \sigma_1 \sigma_2^{-1} \sigma_1^{-1} \sigma_2^{-1}$. Then
$$
r_2 = r_{2,1} = S_{1,\sigma_1} S_{\sigma_1,\sigma_2} S_{\sigma_1 \sigma_2, \sigma_1} S_{\sigma_2 \sigma_1, \sigma_2}^{-1}
S_{\sigma_2, \sigma_1}^{-1} S_{1, \sigma_2}^{-1} =  S_{\sigma_2 \sigma_1, \sigma_2}^{-1} = 1,
$$
i.e. $S_{\sigma_2 \sigma_1, \sigma_2} = 1$ and we can remove this generator.

Conjugating this relation by $\sigma_1^{-1}$, we get
$$
r_{2,\sigma_1} =  S_{1,\sigma_1} S_{\sigma_1,\sigma_1} S_{1,\sigma_2} S_{\sigma_2, \sigma_1} S_{\sigma_1 \sigma_2 \sigma_1, \sigma_2}^{-1}
S_{\sigma_1 \sigma_2, \sigma_1}^{-1} S_{\sigma_1, \sigma_2}^{-1} S_{1, \sigma_1}^{-1} =  S_{\sigma_1, \sigma_1} S_{\sigma_1 \sigma_2 \sigma_1, \sigma_2}^{-1}= 1,
$$
i.e. $ S_{\sigma_1, \sigma_1}=S_{\sigma_1 \sigma_2 \sigma_1, \sigma_2}$.

Conjugating $r_2$ by $\sigma_2^{-1}$, we get
$$
r_{2,\sigma_2} =  S_{1,\sigma_2} S_{\sigma_2,\sigma_1}  S_{\sigma_2 \sigma_1, \sigma_2} S_{\sigma_1 \sigma_2 \sigma_1, \sigma_1}
S_{\sigma_1, \sigma_2}^{-1} S_{1,\sigma_1}^{-1} S_{\sigma_2, \sigma_2}^{-1}  S_{1,\sigma_2}^{-1} =  S_{\sigma_2 \sigma_1, \sigma_2} S_{\sigma_1 \sigma_2 \sigma_1, \sigma_1}   S_{ \sigma_2, \sigma_2}^{-1} = 1.
$$
Since $S_{\sigma_2 \sigma_1, \sigma_2} = 1$, from this relation follows that
$S_{\sigma_1 \sigma_2 \sigma_1, \sigma_1} = S_{\sigma_2, \sigma_2}$ and we can remove $S_{\sigma_1 \sigma_2 \sigma_1, \sigma_1}$.

Conjugating $r_2$ by $(\sigma_1 \sigma_2)^{-1}$, we get

Relation
$${r_{2,{\sigma _1}{\sigma _2}}} = {S_{{\sigma _1}{\sigma _2}{\sigma _1},{\sigma _2}}}{S_{{\sigma _2}{\sigma _1},{\sigma _1}}}S_{{\sigma _1},{\sigma _1}}^{ - 1}S_{{\sigma _1}{\sigma _2},{\sigma _2}}^{ - 1} = 1$$
gives relation
$${S_{{\sigma _1}{\sigma _2}{\sigma _1},{\sigma _2}}}{S_{\sigma _2\sigma _1,\sigma _1}} = {S_{\sigma _1\sigma _2,\sigma _2}}{S_{\sigma _1,\sigma _1}}.$$

Conjugating by $(\sigma _2 \sigma _1)^{-1}$ we get

$${r_{2,{\sigma _2}{\sigma _1}}} = {S_{{\sigma _2}{\sigma _1},{\sigma _1}}}{S_{{\sigma _2},{\sigma _2}}}S_{{\sigma _1}{\sigma _2},{\sigma _2}}^{ - 1}S_{{\sigma _1}{\sigma _2}{\sigma _1},{\sigma _1}}^{ - 1} = 1$$
or
$${S_{{\sigma _2}{\sigma _1},{\sigma _1}}}{S_{{\sigma _2},{\sigma _2}}} = {S_{{\sigma _1}{\sigma _2}{\sigma _1},{\sigma _1}}}{S_{{\sigma _1}{\sigma _2},{\sigma _2}}},$$

$${S_{\sigma _2\sigma _1,\sigma _1}}{S_{\sigma _2,\sigma _2}} = {S_{\sigma _2,\sigma _2}} {S_{\sigma _1 \sigma _2,\sigma _2}}.$$

Take the relation:

\[{r_{2,{\sigma _1}{\sigma _2}{\sigma _1}}} = {S_{{\sigma _1}{\sigma _2}{\sigma _1},{\sigma _1}}}{S_{{\sigma _1}{\sigma _2},{\sigma _2}}}{S_{\sigma _1 \sigma _1}}S_{{\sigma _2},{\sigma _2}}^{ - 1}S_{{\sigma _2}{\sigma _1},{\sigma _1}}^{ - 1} S_{{\sigma _1}{\sigma _2}{\sigma _1},{\sigma _2}}^{ - 1}= 1,\]
that is equivalent to

\[{S_{{\sigma _2},{\sigma _2}}}{S_{{\sigma _1}{\sigma _2},{\sigma _2}}}{S_{\sigma _1 \sigma _1}}= {S_{{\sigma _1}{\sigma _2}{\sigma _1},{\sigma _2}}}{S_{{\sigma _2}{\sigma _1},{\sigma _1}}}{S_{{\sigma _2},{\sigma _2}}}\]
or

\[{S_{{\sigma _2},{\sigma _2}}}{S_{{\sigma _1}{\sigma _2},{\sigma _2}}}{S_{\sigma _1 \sigma _1}}= {S_{{\sigma _1},{\sigma _1}}}{S_{{\sigma _2}{\sigma _1},{\sigma _1}}}{S_{{\sigma _2},{\sigma _2}}}.\]
Now the lemma follows. 
\end{proof}

\begin{lemma} \label{l3}
From relation $r_3 = \sigma_2 \tau_2 \sigma_2^{-1} \tau_2^{-1}$ follows 6 relations, applying which we can remove 3 generators:
$$
{S_{1,{\tau _2}}} = {S_{{\sigma _2},{\tau _2}}},~~~{S_{{\sigma _1},{\tau _2}}} = {S_{{\sigma _1}{\sigma _2},{\tau _2}}},~~~ {S_{{\sigma _2}{\sigma _1},{\tau _2}}} = {S_{{\sigma _1}{\sigma _2}{\sigma _1},{\tau _2}}},
$$
and we get 3 relations:

\[ {S_{{\sigma _2},{\sigma _2}}} {S_{{\sigma _2},{\tau _2}}} = {S_{{\sigma _2},{\tau _2}}} {S_{\sigma_2, {\sigma _2}}},\]

\[{S_{\sigma _1 \sigma _2, \sigma _2}{S_{{\sigma _1}, {\tau _2}}}= {S_{{\sigma _1}, {\tau _2}}}S_{{\sigma _1}{\sigma _2},{\sigma _2}}},\]

\[{S_{\sigma _1,\sigma _1}} {S_{{\sigma _2}{\sigma _1},{\tau _2}}} =  {S_{{\sigma _2}{\sigma _1},{\tau _2}}} {S_{\sigma _1,\sigma _1}} .\]
\end{lemma}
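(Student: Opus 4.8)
The plan is to repeat verbatim the Reidemeister--Schreier rewriting already carried out in \lemref{l1} and \lemref{l2}, now applied to the commuting relation $r_3 = \sigma_2 \tau_2 \sigma_2^{-1} \tau_2^{-1}$, which encodes $\sigma_2 \tau_2 = \tau_2 \sigma_2$. For each of the six coset representatives $\lambda \in \Lambda_3$ I would form the conjugate $\lambda\, r_3\, \lambda^{-1}$, apply the rewriting transformation $\tau$ letter by letter, and then substitute the values of the generators $S_{\lambda,a}$ tabulated at the beginning of the section. Exactly six consequences arise, one per representative.

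Two observations drive every cancellation. First, many of the $S_{\lambda,\sigma_i}$ are trivial; in particular $S_{1,\sigma_1}=S_{1,\sigma_2}=S_{\sigma_1,\sigma_2}=S_{\sigma_2,\sigma_1}=S_{\sigma_2 \sigma_1,\sigma_2}=1$, so the outer letters of each rewritten conjugate collapse. Second, the case $\lambda = 1$ already produces $S_{\sigma_2,\tau_2}=S_{1,\tau_2}$, the first removal. I would then take $\lambda = \sigma_1$ and $\lambda = \sigma_2 \sigma_1$, where the trivial generators reduce the rewritten word to a product of two $S_{\cdot,\tau_2}$ terms, yielding the remaining removals $S_{\sigma_1,\tau_2}=S_{\sigma_1 \sigma_2,\tau_2}$ and $S_{\sigma_2 \sigma_1,\tau_2}=S_{\sigma_1 \sigma_2 \sigma_1,\tau_2}$. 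Finally the three representatives $\lambda = \sigma_2$, $\lambda = \sigma_1 \sigma_2$, and $\lambda = \sigma_1 \sigma_2 \sigma_1$ give the three genuine relations; in each the trivial generators cancel and the three identifications just found are substituted to bring the relation into the stated commutator form.

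The only delicate point is the bookkeeping of the coset prefixes $K_{i_j} = \overline{a_{i_1}^{\varepsilon_1} \cdots}$ at each letter of the rewriting. Every such prefix is determined by its image under $\pi$, and since $\pi(\tau_2)=1$ the singular letters never change the coset while each $\sigma_2^{\pm1}$ toggles it by $s_2 = (2,3)$ and each $\sigma_1^{\pm1}$ by $s_1 = (1,2)$. For the longer representatives one must also use $\pi(\sigma_2 \sigma_1 \sigma_2) = s_2 s_1 s_2 = s_1 s_2 s_1$ to identify the relevant prefix as $\sigma_1 \sigma_2 \sigma_1$. I expect this prefix tracking to be the main source of potential error, but once the running value of $\pi$ on each prefix is tabulated the cancellations are forced, and the six consequences of $r_3$ follow at once, completing the proof.
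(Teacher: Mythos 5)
Your proposal is correct and follows essentially the same route as the paper: conjugate $r_3$ by each of the six Schreier representatives, rewrite via $\tau$, use the trivial generators $S_{1,\sigma_i}=S_{\sigma_1,\sigma_2}=S_{\sigma_2,\sigma_1}=S_{\sigma_2\sigma_1,\sigma_2}=1$ to collapse the words, and obtain the three removals from $\lambda=1,\sigma_1,\sigma_2\sigma_1$ and the three commutator relations from $\lambda=\sigma_2,\sigma_1\sigma_2,\sigma_1\sigma_2\sigma_1$ (the last also using $S_{\sigma_1\sigma_2\sigma_1,\sigma_2}=S_{\sigma_1,\sigma_1}$, which your appeal to the tabulated values covers). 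This matches the paper's proof of the lemma step for step.
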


\begin{proof} Consider the relation $r_3 = \sigma_2 \tau_2 \sigma_2^{-1} \tau_2^{-1}$. From it

\[{r_{3,1}} = {S_{{\sigma _2},{\tau _2}}}S_{1,{\tau _2}}^{ - 1}=1,\]

\[{S_{1,{\tau _2}}} = {S_{{\sigma _2},{\tau _2}}}.\]

Conjugating by $(\sigma_1)^{-1}$
\[{r_{3,{\sigma _1}}} = {S_{{\sigma _1}{\sigma _2},{\tau _2}}}S_{{\sigma _1},{\tau _2}}^{ - 1}\]
or
\[{S_{{\sigma _1},{\tau _2}}} = {S_{{\sigma _1}{\sigma _2},{\tau _2}}}.\]

Conjugating by $(\sigma_2)^{-1}$
\[{r_{3,{\sigma _2}}} = {S_{{\sigma _2},{\sigma _2}}}{S_{1,{\tau _2}}}S_{{\sigma _2},{\sigma _2}}^{ - 1} S_{{\sigma _2},{\tau _2}}^{ - 1} =1,\]
and we have the relation
\[ {S_{{\sigma _2},{\sigma _2}}} {S_{{1},{\tau _2}}} = {S_{{\sigma _2},{\tau _2}}} {S_{\sigma_2, {\sigma _2}}}.\]

Conjugating by $(\sigma _1 \sigma_2)^{-1}$
\[{r_{3,{\sigma _1}{\sigma _2}}} = {S_{{\sigma _1}{\sigma _2},{\sigma _2}}}{S_{{\sigma _1},{\tau _2}}}S_{{\sigma _1}{\sigma _2},{\sigma _2}}^{ - 1} S_{{\sigma _1}{\sigma _2},{\tau _2}}^{ - 1} = 1\]
or
\[{S_{\sigma _1 \sigma _2, \sigma _2}{S_{{\sigma _1}, {\tau _2}}}= {S_{{\sigma _1}{\sigma _2},{\tau _2}}}S_{{\sigma _1}{\sigma _2},{\sigma _2}}}.\]

Conjugating by $(\sigma _2 \sigma_1)^{-1}$
\[{r_{3,{\sigma _2}{\sigma _1}}} = {S_{{\sigma _1}{\sigma _2}{\sigma _1},{\tau _2}}}S_{{\sigma _2}{\sigma _1},{\tau _2}}^{ - 1} = 1.\]
We can remove the generator

\[{S_{{\sigma _2}{\sigma _1},{\tau _2}}} = {S_{{\sigma _1}{\sigma _2}{\sigma _1},{\tau _2}}}.\]

Conjugating by $(\sigma_1 \sigma _2 \sigma_1)^{-1}$
\[{r_{3,{\sigma _1}{\sigma _2}{\sigma _1}}} = {S_{\sigma _1  \sigma _2 \sigma _1 ,\sigma _2}} {S_{\sigma _2 \sigma _1 ,\tau _2}}  S_{\sigma _1  \sigma _2 \sigma _1 ,\sigma _2}^{ - 1} S_{{\sigma _1}{\sigma _2}{\sigma _1},{\tau _2}}^{ - 1} = 1\]
or
\[{S_{\sigma _1,\sigma _1}} {S_{{\sigma _2}{\sigma _1},{\tau _2}}} = {S_{{\sigma _1}{\sigma _2}{\sigma _1},{\tau _2}}}{S_{\sigma _1,\sigma _1}} .\]
Now the lemma follows. 
\end{proof}

\begin{lemma} \label{l4}
From relation $r_4 = \sigma_1 \sigma_2 \tau_1 \sigma_2^{-1} \sigma_1^{-1} \tau_2^{-1}$  follows 6 relations, applying which we can remove 2 generators:
$$
{S_{1,{\tau _2}}} = {S_{{\sigma _1}{\sigma _2},{\tau _1}}},~~~{S_{{\sigma _1}{\sigma _2}{\sigma _1},{\tau _1}}} = {S_{{\sigma _2},{\tau _2}}}$$

and we get relations:

$$
{S_{{\sigma _1},{\sigma _1}}}S_{{\sigma _2},{\tau _1}}=S_{{\sigma _1},{\tau _2}}{S_{{\sigma _1},{\sigma _1}}},
$$

$${S_{{\sigma _1},{\sigma _1}}}{S_{{\sigma _2\sigma _1},{\tau _1}}}=S_{{\sigma _1 \sigma _2},{\tau _2}}{S_{{\sigma _1},{\sigma _1}}},$$

$${S_{\sigma_2 \sigma_1,\sigma _1}S_{{\sigma _2},{\sigma _2}}}S_{1,\tau _1}= S_{{ \sigma _2  \sigma _1},{\tau _2}}S_{{ \sigma _2  \sigma _1},{\sigma _1}} S_{{ \sigma _2},{\sigma _2}},$$

$${S_{\sigma_2,\sigma _2} S_{{\sigma _1 \sigma _2},{\sigma _2}}}S_{\sigma_1,\tau _1}=S_{{\sigma _2 \sigma _1 },{\tau _2}}S_{{ \sigma _2 },{\sigma _2}}S_{{ \sigma _1\sigma _2},{\sigma _2}}.$$

\end{lemma}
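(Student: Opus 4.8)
The plan is to mirror the computations carried out for $r_1$, $r_2$, $r_3$ in \lemref{l1}, \lemref{l2} and \lemref{l3}, but now applied to the mixed defining relation $\sigma_1\sigma_2\tau_1=\tau_2\sigma_1\sigma_2$, rewritten as $r_4=\sigma_1\sigma_2\tau_1\sigma_2^{-1}\sigma_1^{-1}\tau_2^{-1}$. For each of the six Schreier representatives $\lambda\in\Lambda_3$ I would form the conjugate $\lambda r_4\lambda^{-1}$ and apply the Reidemeister rewriting $\tau$ to it, producing six relations $r_{4,\lambda}$ among the generators $S_{\mu,a}$. Two of these will collapse to single-generator identities (the eliminations), and the other four will be the genuine relations.

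First I would fix the rewriting bookkeeping. Scanning $\lambda r_4\lambda^{-1}$ from left to right, each positive letter $a$ contributes $S_{K,a}$, with $K$ the representative of the prefix \emph{preceding} $a$, and each negative letter $a^{-1}$ contributes $S_{K,a}^{-1}$, with $K$ the representative of the prefix up to and \emph{including} that letter. The one delicate feature here, absent from the purely braid-theoretic relation $r_2$, is that $\pi(\tau_i)=1$: passing a $\tau$-letter never changes the current coset, whereas each $\sigma_i^{\pm1}$ toggles it by $s_i$. So to read off the subscripts $K$ I would track the permutation image of every prefix, noting that the $\tau_1$ and $\tau_2^{-1}$ occurrences in $r_4$ leave the running representative fixed.

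Next I would simplify each $r_{4,\lambda}$ using the generators already shown trivial in \secref{pf}, namely $S_{1,\sigma_1}=S_{1,\sigma_2}=S_{\sigma_1,\sigma_2}=S_{\sigma_2,\sigma_1}=S_{\sigma_1\sigma_2,\sigma_1}=S_{\sigma_2\sigma_1,\sigma_2}=1$. The cases $\lambda=1$ and $\lambda=\sigma_2\sigma_1$ should reduce to $S_{1,\tau_2}=S_{\sigma_1\sigma_2,\tau_1}$ and $S_{\sigma_1\sigma_2\sigma_1,\tau_1}=S_{\sigma_2,\tau_2}$, letting me discard $S_{\sigma_1\sigma_2,\tau_1}$ and $S_{\sigma_1\sigma_2\sigma_1,\tau_1}$; the remaining cases $\lambda\in\{\sigma_1,\sigma_2,\sigma_1\sigma_2,\sigma_1\sigma_2\sigma_1\}$ give the four substantive relations. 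To land on exactly the stated forms I would then substitute the identifications from the earlier lemmas — in particular $S_{\sigma_1\sigma_2\sigma_1,\sigma_1}=S_{\sigma_2,\sigma_2}$, $S_{\sigma_1\sigma_2\sigma_1,\sigma_2}=S_{\sigma_1,\sigma_1}$ and $S_{\sigma_2\sigma_1,\sigma_2}=1$ from \lemref{l2}, along with the surviving $\tau$-identifications from \lemref{l1} and \lemref{l3} — converting the raw output into the four displayed equations.

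I expect the main obstacle to be purely the length and interleaving of $r_4$: because it mixes braid and singular letters, the conjugates by $\sigma_1\sigma_2$, $\sigma_2\sigma_1$ and $\sigma_1\sigma_2\sigma_1$ expand to words of length up to twelve, and an off-by-one error in which prefix reduces to which representative immediately corrupts a subscript. The cross-check I would rely on is structural: each surviving relation must carry the same multiset of $\sigma$-type generators $S_{\cdot,\sigma_i}$ on both sides, and the single $\tau$-type generator must trade a $\tau_1$-entry on the left for a $\tau_2$-entry on the right — reflecting precisely that $r_4$ exchanges $\tau_1$ for $\tau_2$. Any rewritten relation violating this balance flags a bookkeeping slip in the coset-tracking that should then be recomputed before the lemma is declared.
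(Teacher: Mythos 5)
Your plan is the paper's own proof: apply the Reidemeister rewriting $\tau$ to $\lambda r_4\lambda^{-1}$ for each of the six Schreier representatives, kill the trivial generators $S_{1,\sigma_1},S_{1,\sigma_2},S_{\sigma_1,\sigma_2},S_{\sigma_2,\sigma_1},S_{\sigma_1\sigma_2,\sigma_1},S_{\sigma_2\sigma_1,\sigma_2}$, and then substitute the identifications $S_{\sigma_1\sigma_2\sigma_1,\sigma_1}=S_{\sigma_2,\sigma_2}$, $S_{\sigma_1\sigma_2\sigma_1,\sigma_2}=S_{\sigma_1,\sigma_1}$ from \lemref{l2} to reach the displayed forms; your coset-tracking convention for the $\tau$-letters and your parity cross-check are both sound. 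The one concrete slip is your assignment of which conjugates collapse: the elimination $S_{\sigma_1\sigma_2\sigma_1,\tau_1}=S_{\sigma_2,\tau_2}$ comes from $\lambda=\sigma_2$, not from $\lambda=\sigma_2\sigma_1$. Indeed $\sigma_2 r_4\sigma_2^{-1}=\sigma_2\sigma_1\sigma_2\tau_1\sigma_2^{-1}\sigma_1^{-1}\tau_2^{-1}\sigma_2^{-1}$ contributes only trivial $\sigma$-type generators (including $S_{\sigma_2\sigma_1,\sigma_2}=1$), whereas $\sigma_2\sigma_1 r_4\sigma_1^{-1}\sigma_2^{-1}$ contains the doubled letter $\sigma_1\sigma_1$, which produces the nontrivial $S_{\sigma_2\sigma_1,\sigma_1}$ and yields the substantive relation $S_{\sigma_2\sigma_1,\sigma_1}S_{\sigma_2,\sigma_2}S_{1,\tau_1}=S_{\sigma_2\sigma_1,\tau_2}S_{\sigma_2\sigma_1,\sigma_1}S_{\sigma_2,\sigma_2}$; the four substantive relations therefore come from $\lambda\in\{\sigma_1,\sigma_1\sigma_2,\sigma_2\sigma_1,\sigma_1\sigma_2\sigma_1\}$. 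This is exactly the kind of bookkeeping error your own sanity check would catch, so the argument goes through once the attribution is corrected.
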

\begin{proof}Take the relation $r_4 = \sigma_1 \sigma_2 \tau_1 \sigma_2^{-1} \sigma_1^{-1} \tau_2^{-1}$ and rewrite in the new generators:

\[{r_{4,1}} = {S_{{\sigma _1}{\sigma _2},{\tau _1}}}S_{1,{\tau _2}}^{ - 1} = 1\]
or
\[{S_{1,{\tau _2}}} = {S_{{\sigma _1}{\sigma _2},{\tau _1}}}.\]

Conjugating by $(\sigma_1)^{-1}$

\[{r_{4,{\sigma _1}}} = {S_{1,\sigma _1}S_{{\sigma _1},{\sigma _1}}}S_{1,\sigma _2}{S_{{\sigma _2},{\tau _1}}}S_{{1,\sigma _2}}^{ - 1}S_{{\sigma _1,\sigma _1}}^{ - 1}S_{{\sigma _1},{\tau _2}}^{ - 1}S_{{1,\sigma _1}}^{ - 1} ={S_{{\sigma _1},{\sigma _1}}} {S_{{\sigma _2},{\tau _1}}}S_{{\sigma _1,\sigma _1}}^{ - 1}S_{{\sigma _1},{\tau _2}}^{ - 1}= 1\]
or
$$
{S_{{\sigma _1},{\sigma _1}}}S_{{\sigma _2},{\tau _1}}=S_{{\sigma _1},{\tau _2}}{S_{{\sigma _1},{\sigma _1}}}.
$$

Next relation

\[{r_{4,{\sigma _2}}} = {S_{{\sigma _1}{\sigma _2}{\sigma _1},{\tau _1}}} S_{{\sigma _2},{\tau _2}}^{ - 1}  = 1\]
or
$${S_{{\sigma _1}{\sigma _2}{\sigma _1},{\tau _1}}} =S_{{\sigma _2},{\tau _2}}.$$

Next relation
\[{r_{4,{\sigma _1}{\sigma _2}}} = {S_{1,\sigma _1}S_{{\sigma _1},{\sigma _2}}}S_{\sigma _1 \sigma _2,\sigma _1}S_{\sigma _1 \sigma _2 \sigma _1,\sigma _2}{S_{{\sigma _2\sigma _1},{\tau _1}}}S_{{\sigma _1\sigma _2\sigma _1,\sigma _2}}^{ - 1}S_{{\sigma _1\sigma _2,\sigma _1}}^{ - 1}S_{{\sigma _1 \sigma _2},{\tau _2}}^{ - 1}S_{{\sigma _1,\sigma _2}}^{ - 1} S_{{1,\sigma _1}}^{ - 1} = \]
\[=S_{\sigma _1 \sigma _2 \sigma _1,\sigma _2}{S_{{\sigma _2\sigma _1},{\tau _1}}}S_{{\sigma _1\sigma _2\sigma _1,\sigma _2}}^{ - 1}S_{{\sigma _1 \sigma _2},{\tau _2}}^{ - 1}=1,\]

then $${S_{{\sigma _1},{\sigma _1}}}{S_{{\sigma _2\sigma _1},{\tau _1}}}=S_{{\sigma _1 \sigma _2},{\tau _2}}{S_{{\sigma _1},{\sigma _1}}}.$$

Take the relation:
\[{r_{4,{\sigma _2}{\sigma _1}}} = {S_{\sigma_2 \sigma_1,\sigma _1}S_{{\sigma _2},{\sigma _2}}}S_{1,\tau _1}S_{{ \sigma _2},{\sigma _2}}^{ - 1}S_{{ \sigma _2  \sigma _1},{\sigma _1}}^{ - 1}S_{{ \sigma _2  \sigma _1},{\tau _2}}^{ - 1}=1 \]
 
and we have the relation

$${S_{\sigma_2 \sigma_1,\sigma _1}S_{{\sigma _2},{\sigma _2}}}S_{1,\tau _1}= S_{{ \sigma _2  \sigma _1},{\tau _2}}S_{{ \sigma _2  \sigma _1},{\sigma _1}} S_{{ \sigma _2},{\sigma _2}}.$$
The relation:  $r_{4,{\sigma _1}{\sigma _2}{\sigma _1}} ={S_{\sigma_1 \sigma_2 \sigma_1,\sigma _1} S_{{\sigma _1 \sigma _2},{\sigma _2}}}S_{\sigma_1,\tau _1}S_{{ \sigma _1\sigma _2},{\sigma _2}}^{ - 1}S_{{ \sigma _1 \sigma _2  \sigma _1},{\sigma _1}}^{ - 1}S_{{\sigma _1 \sigma _2  \sigma _1},{\tau _2}}^{ - 1}=1 $
or
$${S_{\sigma_1 \sigma_2 \sigma_1,\sigma _1} S_{{\sigma _1 \sigma _2},{\sigma _2}}}S_{\sigma_1,\tau _1}=S_{{\sigma _1 \sigma _2  \sigma _1},{\tau _2}}S_{{ \sigma _1 \sigma _2  \sigma _1},{\sigma _1}}S_{{ \sigma _1\sigma _2},{\sigma _2}} .$$
Hence, we have proven the lemma. 
\end{proof}


\begin{lemma} \label{l5}
From relation $r_5 = \sigma_2 \sigma_1 \tau_2 \sigma_1^{-1} \sigma_2^{-1} \tau_1^{-1}$ follows  relations:

$$S_{{\sigma _2}{\sigma _1},{\tau _2}}= S_{{1},{\tau _1}},$$
$$S_{{\sigma _1}{\sigma _2}{\sigma _1},{\tau _2}}= S_{{\sigma _1},{\tau _1}},$$
$${S_{{\sigma _2},{\sigma _2}}} {S_{\sigma_1,{\tau _2}}}= S_{{\sigma _2},{\tau _1}} S_{{\sigma _2}, {\sigma _2}},$$
$$S_{{\sigma _1 \sigma _2}, {\sigma _2}}  {S_{{\sigma _1},{\sigma _1}}} S_{{\sigma_2, \tau _2}}  =  S_{{\sigma _2, \tau _2}}  S_{{\sigma _1 \sigma _2}, {\sigma _2}} S_{{\sigma _1}, {\sigma _1}},$$
$$S_{{\sigma _2}, {\sigma _2}} S_{{\sigma_1, \tau _2}}  =  S_{{\sigma _2}, {\tau _1}} S_{{\sigma _2}, {\sigma _2}},$$
$$S_{{\sigma _1}, {\sigma _1}}S_{{\sigma _2 \sigma _1 }, {\sigma _1}}  S_{{\sigma_2, \tau _2}}  =  S_{{\sigma _2}, {\tau _2}} S_{{ \sigma _1 }, {\sigma _1}} S_{{\sigma _2 \sigma _1}, {\sigma _1}}.$$

\end{lemma}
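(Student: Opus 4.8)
The plan is to rewrite the last defining relation $r_5 = \sigma_2\sigma_1\tau_2\sigma_1^{-1}\sigma_2^{-1}\tau_1^{-1}$, which encodes the mixed relation \eqnref{mm2}, by the Reidemeister--Schreier procedure of Section~2, following the same scheme already used in Lemmas~\ref{l1}--\ref{l4}. Concretely, for each of the six coset representatives $\lambda$ in the Schreier system $\Lambda_3 = \{1,\sigma_1,\sigma_2,\sigma_1\sigma_2,\sigma_2\sigma_1,\sigma_1\sigma_2\sigma_1\}$, I would form the conjugate $\lambda\, r_5\, \lambda^{-1}$, apply the rewriting map $\tau$, and simplify. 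Each of the six conjugates yields one of the six claimed relations, so the lemma is assembled by carrying out these six rewritings in turn.

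To evaluate $\tau(\lambda r_5 \lambda^{-1})$ I would read off, letter by letter, the factors $S_{K,a}^{\pm1}$ where $K$ is the bar-image of the current prefix, and then delete all trivial generators $S_{1,\sigma_1}=S_{1,\sigma_2}=S_{\sigma_1,\sigma_2}=S_{\sigma_2,\sigma_1}=S_{\sigma_1\sigma_2,\sigma_1}=S_{\sigma_2\sigma_1,\sigma_2}=1$ recorded in the table above. For $\lambda=1$ the word $\tau(r_5)=S_{1,\sigma_2}S_{\sigma_2,\sigma_1}S_{\sigma_2\sigma_1,\tau_2}S_{\sigma_2,\sigma_1}^{-1}S_{1,\sigma_2}^{-1}S_{1,\tau_1}^{-1}$ collapses to the first relation $S_{\sigma_2\sigma_1,\tau_2}=S_{1,\tau_1}$; the choices $\lambda=\sigma_1$ and $\lambda=\sigma_2$ give in the same way the second relation $S_{\sigma_1\sigma_2\sigma_1,\tau_2}=S_{\sigma_1,\tau_1}$ and the third relation $S_{\sigma_2,\sigma_2}S_{\sigma_1,\tau_2}=S_{\sigma_2,\tau_1}S_{\sigma_2,\sigma_2}$. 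The three longer representatives $\sigma_1\sigma_2$, $\sigma_2\sigma_1$ and $\sigma_1\sigma_2\sigma_1$ are treated identically; here the simplification also invokes the generator identifications already established in Lemmas~\ref{l3} and~\ref{l4} (for instance $S_{1,\tau_2}=S_{\sigma_2,\tau_2}$ and $S_{\sigma_1\sigma_2,\tau_1}=S_{1,\tau_2}$), which is what rewrites the raw output of the $\sigma_1\sigma_2$-conjugate into the stated fourth relation.

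The computation is entirely mechanical, and the only place where care is genuinely needed is the evaluation of the bar map $\overline{Q}$ at each intermediate prefix. Because $\pi(\tau_i)=1$, the singular letters leave the coset unchanged, so the representative attached to any prefix depends only on the image in $S_3$ of its braid letters; yet those same singular letters do contribute honest generators $S_{K,\tau_j}$ to the rewritten word. Keeping these two roles of the $\tau_i$ separate is the main source of potential error, and once each of the six conjugates is reduced correctly, collecting the results gives precisely the six relations in the statement.
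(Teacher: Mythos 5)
Your proposal is correct and follows essentially the same route as the paper: conjugate $r_5$ by each of the six Schreier representatives, apply the rewriting map $\tau$, delete the trivial generators, and use the identifications from the earlier lemmas (e.g.\ $S_{1,\tau_2}=S_{\sigma_2,\tau_2}$, $S_{\sigma_1\sigma_2,\tau_1}=S_{1,\tau_2}$, $S_{\sigma_1\sigma_2\sigma_1,\sigma_1}=S_{\sigma_2,\sigma_2}$) to bring the raw outputs of the three longer conjugates into the stated forms. Your explicit computation of $\tau(r_5)$ and the remark about the two distinct roles of the singular letters match the paper's calculation exactly.
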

\begin{proof} Take the relation $r_5 = \sigma_2 \sigma_1 \tau_2 \sigma_1^{-1} \sigma_2^{-1} \tau_1^{-1}$ and rewrite in the new generators:

\[{r_{5,1}} = S_{{\sigma _2}{\sigma _1},{\tau _2}} S_{{1},{\tau _1}}^{ - 1} = 1\]
we get relation
$$
S_{{\sigma _2}{\sigma _1},{\tau _2}}= S_{{1},{\tau _1}}.
$$

Relation

\[{r_{5,{\sigma _1}}} = S_{{\sigma _1}{\sigma _2}{\sigma _1},{\tau _2}}
S_{{\sigma _1},{\tau _1}}^{ - 1} = 1\]
or
 $$S_{{\sigma _1}{\sigma _2}{\sigma _1},{\tau _2}}= S_{{\sigma _1},{\tau _1}}.$$
Relation

\[{r_{5,{\sigma _2}}} = {S_{{\sigma _2},{\sigma _2}}} {S_{\sigma_1,{\tau _2}}} S_{{\sigma _2} {\sigma _2}}^{ - 1} S_{{\sigma _2}, {\tau_1}}^{ - 1}=1\]

we get relation
$$
 {S_{{\sigma _2},{\sigma _2}}} {S_{\sigma_1,{\tau _2}}}= S_{{\sigma _2},{\tau _1}} S_{{\sigma _2}, {\sigma _2}}.
$$

From relation ${r_{5,{\sigma _1}{\sigma _2}}} =  1$ follows relation

$$S_{{\sigma _1 \sigma _2}, {\sigma _2}}  {S_{{\sigma _1},{\sigma _1}}}S_{{1, \tau _2}}  =  S_{{\sigma _1 \sigma _2}, {\tau _1}}  S_{{\sigma _1 \sigma _2}, {\sigma _2}} S_{{\sigma _1}, {\sigma _1}},$$

$$S_{{\sigma _1 \sigma _2}, {\sigma _2}}  {S_{{\sigma _1},{\sigma _1}}} S_{{\sigma_2, \tau _2}}  =  S_{{\sigma _2, \tau _2}}  S_{{\sigma _1 \sigma _2}, {\sigma _2}} S_{{\sigma _1}, {\sigma _1}}.$$

From $r_{5,{\sigma _2}{\sigma _1}} = 1$ follows relation

$$S_{{\sigma _1 \sigma _2 \sigma _1 }, {\sigma _1}} S_{{\sigma _1 \sigma _2, \tau _2}}  =  S_{{\sigma _2 \sigma _1}, {\tau _1}}  S_{{\sigma _1 \sigma _2 \sigma _1 }, {\sigma _1}},$$

$$S_{{\sigma _2}, {\sigma _2}} S_{{\sigma_1, \tau _2}}  =  S_{{\sigma _2}, {\tau _1}} S_{{\sigma _2}, {\sigma _2}}.$$

From ${r_{5,{\sigma _1}{\sigma _2}{\sigma _1}}} =  1$ follows relation

$$S_{{\sigma _1 \sigma _2 \sigma _1 }, {\sigma _2}} S_{{\sigma _2 \sigma _1 }, {\sigma _1}} S_{{\sigma _2, \tau _2}}  =  S_{{\sigma _1 \sigma _2 \sigma _1 }, {\tau _1}} S_{{\sigma _1 \sigma _2 \sigma _1 }, {\sigma _2}} S_{{\sigma _2 \sigma _1 }, {\sigma _1}},$$

$$S_{{\sigma _1}, {\sigma _1}}S_{{\sigma _2 \sigma _1 }, {\sigma _1}}  S_{{\sigma_2, \tau _2}}  =  S_{{\sigma _2}, {\tau _2}} S_{{ \sigma _1 }, {\sigma _1}} S_{{\sigma _2 \sigma _1}, {\sigma _1}}.$$
Hence we have proven the lemma. \end{proof}

\medskip

\medskip

Therefore,

$$S_{\sigma_1,\tau_1} = S_{1,\tau_1}; ~
S_{\sigma_2\sigma_1, \tau_1} = S_{\sigma_2, \tau_1}; ~
S_{\sigma_1 \sigma_2 \sigma_1,\tau_1} =S_{\sigma_1 \sigma_2, \tau_1};$$
$$S_{\sigma_2 \sigma_1, \sigma_2} = 1; ~
S_{\sigma_1, \sigma_1}=S_{\sigma_1 \sigma_2 \sigma_1, \sigma_2}; ~
S_{\sigma_1 \sigma_2 \sigma_1, \sigma_1} = S_{\sigma_2, \sigma_2}; 
$$
$$
{S_{1,{\tau _2}}} = {S_{{\sigma _2},{\tau _2}}}; ~
S_{{\sigma _1},{\tau _2}} = {S_{{\sigma _1}{\sigma _2},{\tau _2}}}; ~
{S_{{\sigma _2}{\sigma _1},{\tau _2}}} = {S_{{\sigma _1}{\sigma _2}{\sigma _1},{\tau _2}}};$$
$${S_{1,{\tau _2}}} = {S_{{\sigma _1}{\sigma _2},{\tau _1}}}; ~
{S_{{\sigma _1}{\sigma _2}{\sigma _1},{\tau _1}}} = {S_{{\sigma _2},{\tau _2}}}; ~
S_{{\sigma _2}{\sigma _1},{\tau _2}}= S_{{1},{\tau _1}}; ~
S_{{\sigma _1}{\sigma _2}{\sigma _1},{\tau _2}}= {S_{{\sigma _1},{\tau _1}}};$$

$$S_{\sigma_1,\sigma_1} S_{\sigma_1,\tau_1} =S_{\sigma_1, \tau_1}S_{\sigma_1, \sigma_1}; $$
$$S_{\sigma_2 \sigma_1, \sigma_1}S_{\sigma _2,\tau _1}= S_{\sigma_2, \tau_1}S_{\sigma _2\sigma _1, \sigma _1};$$
$$S_{ \sigma_2, \sigma_2}S_{ \sigma_2,\tau_2} = S_{\sigma_2, \tau_2} S_{\sigma_2, \sigma_2};$$
$${S_{{\sigma _1},{\sigma _1}}}{S_{\sigma _2\sigma _1,\sigma _1}} = {S_{\sigma _1\sigma _2,\sigma _2}}{S_{\sigma _1,\sigma _1}};$$
$${S_{\sigma _2\sigma _1,\sigma _1}}{S_{\sigma _2,\sigma _2}} = {S_{\sigma _2,\sigma _2}} {S_{\sigma _1 \sigma _2,\sigma _2}};$$
$${S_{{\sigma _2},{\sigma _2}}}{S_{{\sigma _1}{\sigma _2},{\sigma _2}}}{S_{\sigma _1 \sigma _1}}= {S_{{\sigma _1},{\sigma _1}}}{S_{{\sigma _2}{\sigma _1},{\sigma _1}}}{S_{{\sigma _2},{\sigma _2}}};$$
\[{S_{\sigma _1 \sigma _2, \sigma _2}{S_{{\sigma _1}, {\tau _2}}}= {S_{{\sigma _1}, {\tau _2}}}S_{{\sigma _1}{\sigma _2},{\sigma _2}}};\]
$$
{S_{{\sigma _1},{\sigma _1}}}S_{{\sigma _2},{\tau _1}}=S_{{\sigma _1},{\tau _2}}{S_{{\sigma _1},{\sigma _1}}};
$$
$${S_{\sigma_2 \sigma_1,\sigma _1}S_{{\sigma _2},{\sigma _2}}}S_{\sigma_1,\tau _1}= S_{{\sigma _1},{\tau _1}}S_{{ \sigma _2  \sigma _1},{\sigma _1}} S_{{ \sigma _2},{\sigma _2}};$$
$${S_{\sigma_2,\sigma _2} S_{{\sigma _1 \sigma _2},{\sigma _2}}}S_{\sigma_1,\tau _1}=S_{{\sigma _1 },{\tau _1}}S_{{ \sigma _2 },{\sigma _2}}S_{{ \sigma _1\sigma _2},{\sigma _2}};$$
$${S_{{\sigma _2},{\sigma _2}}} {S_{\sigma_1,{\tau _2}}}= S_{{\sigma _2},{\tau _1}} S_{{\sigma _2}, {\sigma _2}};$$
$$S_{{\sigma _1 \sigma _2}, {\sigma _2}}  {S_{{\sigma _1},{\sigma _1}}} S_{{\sigma_2, \tau _2}}  =  S_{{\sigma _2, \tau _2}}  S_{{\sigma _1 \sigma _2}, {\sigma _2}} S_{{\sigma _1}, {\sigma _1}};$$
$$S_{{\sigma _2}, {\sigma _2}} S_{{\sigma_1, \tau _2}}  =  S_{{\sigma _2}, {\tau _1}} S_{{\sigma _2}, {\sigma _2}};$$
$$S_{{\sigma _1}, {\sigma _1}}S_{{\sigma _2 \sigma _1 }, {\sigma _1}}  S_{{\sigma_2, \tau _2}}  =  S_{{\sigma _2}, {\tau _2}} S_{{ \sigma _1 }, {\sigma _1}} S_{{\sigma _2 \sigma _1}, {\sigma _1}}.$$

\begin{lemma} The following equalities hold
\begin{align*}
& {S_{1,{\tau _1}}} = {\tau _1} = {c_{12}},\\
& {S_{1,{\tau _2}}} = {\tau _2} = {c_{23}},\\
& {S_{{\sigma _1},{\sigma _1}}} = \sigma _1^2 = {a_{12}},\\
&{S_{{\sigma _1},{\tau _1}}} = {\tau _1} = {c_{12}},\\
&{S_{{\sigma _1},{\tau _2}}} = {\sigma _1}{\tau _2}\sigma _1^{ - 1} = a_{12} c_{13} a_{12}^{ - 1},\\
&{S_{{\sigma _2},{\sigma _2}}} = \sigma _2^2 = {a_{23}}, \\
& {S_{{\sigma _2},{\tau _1}}} = {\sigma _2}{\tau _1}\sigma _2^{ - 1} = {c_{13}},\\
& {S_{{\sigma _2},{\tau _2}}} = {\tau _2} = {c_{23}},\\
& {S_{{\sigma _1}{\sigma _2},{\sigma _2}}} = {\sigma _1}\sigma _2^2\sigma _1^{ - 1} = {a_{12} a_{13}a_{12}^{-1}},\\
& {S_{{\sigma _1}{\sigma _2},{\tau _1}}} = {\sigma _1}{\sigma _2}{\tau _1}\sigma _2^{ - 1}\sigma _1^{ - 1} = {c_{23}},\\
&{S_{{\sigma _1}{\sigma _2},{\tau _2}}} = {\sigma _1}{\tau _2}\sigma _1^{ - 1} = {a_{12}{c_{13}}a_{12}^{-1}},\\
& {S_{{\sigma _2}{\sigma _1},{\sigma _1}}} = {\sigma _2}\sigma _1^2\sigma _2^{ - 1} = {a_{13}},\\
& {S_{{\sigma _2}{\sigma _1},{\tau _1}}} = {\sigma _2}{\tau _1}\sigma _2^{ - 1} = {c_{13}},\\
& {S_{{\sigma _2}{\sigma _1},{\tau _2}}} = {\sigma _2}{\sigma _1}{\tau _2}\sigma _1^{ - 1}\sigma _2^{ - 1} = {c_{12}},\\
&{S_{{\sigma _1}{\sigma _2}{\sigma _1},{\sigma _1}}} = {\sigma _1}{\sigma _2}\sigma _1^2\sigma _2^{ - 1}\sigma _1^{ - 1} = {a_{23}},\\
&{S_{{\sigma _1}{\sigma _2}{\sigma _1},{\sigma _2}}} = {\sigma _1}{\sigma _2}\sigma _1 \sigma_2 \sigma _1^{ - 1}\sigma _2^{ - 1} = {a_{12}},\\
&{S_{{\sigma _1}{\sigma _2}{\sigma _1},{\tau _1}}} = {\sigma _1}{\sigma _2}{\tau _1}\sigma _2^{ - 1}\sigma _1^{ - 1} = {c_{23}},\\
& {S_{{\sigma _1}{\sigma _2}{\sigma _1},{\tau _2}}} = {\sigma _1}{\sigma _2}{\sigma _1}{\tau _2}\sigma _1^{ - 1}\sigma _2^{ - 1}\sigma _1^{ - 1} = {c_{12}}.\\
\end{align*}
Thus $ST_3$ is generated by elements $a_{12}$, $a_{13}$, $a_{23}$, $c_{12}$, $c_{13}$, $c_{23}$. Their geometric interpretations are given in Fig.~\ref{relaij}.

\begin{figure}[h]
\centering{
\includegraphics[width=10cm]{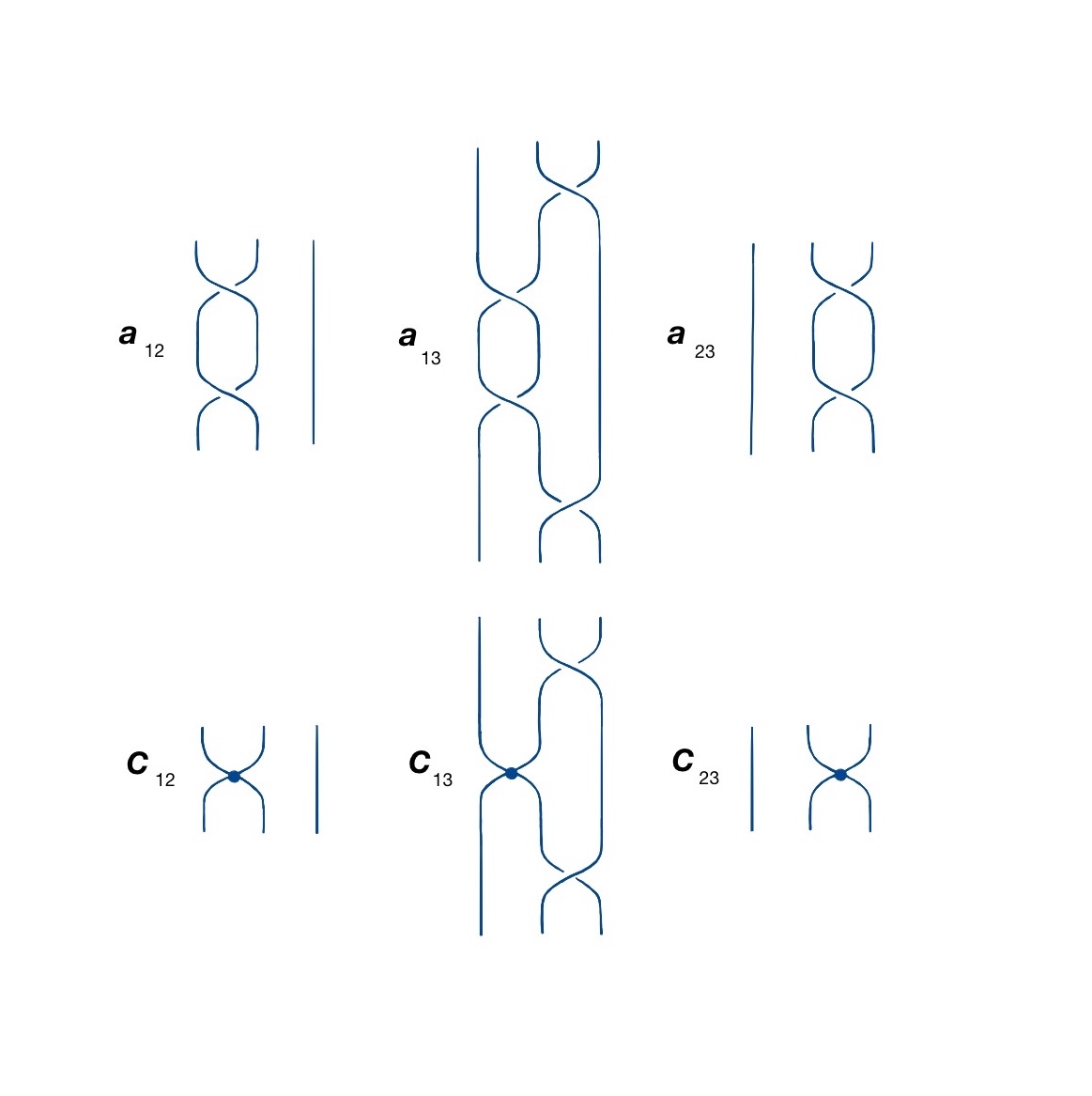}
\caption{Geometric interpretation of generators of $ST_3$} \label{relaij}
}
\end{figure}

\end{lemma}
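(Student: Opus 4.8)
The plan is to name six of the surviving Reidemeister--Schreier generators and then to express each of the remaining twelve as a word in these six, reading the values off the explicit computation of the $S_{\lambda,a}$ carried out at the start of this section. Concretely, I set $a_{12}=S_{\sigma_1,\sigma_1}=\sigma_1^2$, $a_{23}=S_{\sigma_2,\sigma_2}=\sigma_2^2$, $a_{13}=S_{\sigma_2\sigma_1,\sigma_1}=\sigma_2\sigma_1^2\sigma_2^{-1}$, $c_{12}=S_{1,\tau_1}=\tau_1$, $c_{23}=S_{1,\tau_2}=\tau_2$, and $c_{13}=S_{\sigma_2,\tau_1}=\sigma_2\tau_1\sigma_2^{-1}$. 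Each of the eighteen nontrivial generators will then be matched to its claimed value, after which the generation statement follows at once from the Reidemeister--Schreier theorem, since every $S_{\lambda,a}$ is either trivial or a word in these six elements.

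Most of the eighteen equalities will be immediate: those whose word was already simplified to $\tau_1$, $\tau_2$ or $\sigma_j\tau_1\sigma_j^{-1}$ at the start of the section are, by definition, $c_{12}$, $c_{23}$ or $c_{13}$, and those simplified to $\sigma_1^2$, $\sigma_2^2$ or $\sigma_2\sigma_1^2\sigma_2^{-1}$ are $a_{12}$, $a_{23}$ or $a_{13}$. The only equalities demanding a genuine manipulation are the two in which a conjugation by $\sigma_1$ must be rewritten as a conjugation by $a_{12}=\sigma_1^2$. For $S_{\sigma_1,\tau_2}=S_{\sigma_1\sigma_2,\tau_2}=\sigma_1\tau_2\sigma_1^{-1}$, the mixed relation $\sigma_1\sigma_2\tau_1=\tau_2\sigma_1\sigma_2$ gives $\tau_2=\sigma_1\sigma_2\tau_1\sigma_2^{-1}\sigma_1^{-1}$, so that
\[
\sigma_1\tau_2\sigma_1^{-1}=\sigma_1^2\,(\sigma_2\tau_1\sigma_2^{-1})\,\sigma_1^{-2}=a_{12}c_{13}a_{12}^{-1}.
\]
For $S_{\sigma_1\sigma_2,\sigma_2}=\sigma_1\sigma_2^2\sigma_1^{-1}$, the braid relation yields $\sigma_1\sigma_2\sigma_1^2\sigma_2^{-1}\sigma_1^{-1}=\sigma_2^2$, which rearranges to
\[
\sigma_1\sigma_2^2\sigma_1^{-1}=\sigma_1^2\,(\sigma_2\sigma_1^2\sigma_2^{-1})\,\sigma_1^{-2}=a_{12}a_{13}a_{12}^{-1}.
\]
The remaining mixed generators, such as $S_{\sigma_1\sigma_2,\tau_1}=\sigma_1\sigma_2\tau_1\sigma_2^{-1}\sigma_1^{-1}=\tau_2=c_{23}$ and $S_{\sigma_2\sigma_1,\tau_2}=\sigma_2\sigma_1\tau_2\sigma_1^{-1}\sigma_2^{-1}=\tau_1=c_{12}$, will follow directly from the two mixed relations $\sigma_1\sigma_2\tau_1=\tau_2\sigma_1\sigma_2$ and $\sigma_2\sigma_1\tau_2=\tau_1\sigma_2\sigma_1$, exactly as recorded at the start of the section.

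Collecting all cases then shows that each of the twenty-four generators $S_{\lambda,a}$ lies in the subgroup generated by $a_{12},a_{13},a_{23},c_{12},c_{13},c_{23}$: six are trivial, and the remaining eighteen equal the words just exhibited. Hence these six elements generate $ST_3$. I expect the main obstacle to be purely organizational rather than conceptual: one must check that all twenty-four generators have been accounted for, that no seventh independent generator is forced, and that each conjugation by a single $\sigma_i$ has been correctly converted into a conjugation by the square $a_{ij}$, which is precisely where the braid relation and the two mixed relations do the essential work.
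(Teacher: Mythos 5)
Your proposal is correct and follows essentially the same route as the paper: the paper states this lemma without a separate proof, relying on the explicit computations of the $S_{\lambda,a}$ given at the start of Section 3 together with the defining relations of $SB_3$, which is exactly what you do. Your two explicit verifications (rewriting $\sigma_1\tau_2\sigma_1^{-1}$ and $\sigma_1\sigma_2^2\sigma_1^{-1}$ as conjugates by $a_{12}=\sigma_1^2$ via the mixed relation $\sigma_1\sigma_2\tau_1=\tau_2\sigma_1\sigma_2$ and the braid relation) are accurate and in fact make explicit the only steps the paper leaves implicit.
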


\begin{proposition} \label{p4.1}
Generators of $SB_3$ act on the generators of $ST_3$ by the rules:

-- action of $\sigma_1^{-1}$:

$$
a_{12}^{\sigma_1^{-1}} = a_{12},~~~a_{13}^{\sigma_1^{-1}} =  a_{23},~~~ a_{23}^{\sigma_1^{-1}} = a_{12}a_{13}a_{12}^{-1},
$$
$$
c_{12}^{\sigma_1^{-1}} = c_{12},~~~c_{13}^{\sigma _1^{-1}} = c_{23},~~~ c_{23}^{\sigma _1^{-1}} = a_{12}c_{13}a_{12}^{-1},
$$

-- action of $\sigma_2^{-1}$:

$$
a_{12}^{\sigma_2^{-1}} = a_{13},~~~a_{13}^{\sigma_2^{-1}} = a_{23}a_{12}a_{23}^{-1},~~~ a_{23}^{\sigma_2^{-1}} = a_{23},
$$
$$
c_{12}^{\sigma _2^{-1}} = c_{13},~~~c_{13}^{\sigma _2^{-1}} = a_{23}{c_{12}}c_{23}^{-1},~~~ c_{23}^{\sigma_2^{-1}} = c_{23},
$$

-- action of $\tau_1^{-1}$:

$$
a_{12}^{\tau_1^{-1}} = c_{12}a_{12}c_{12}^{-1} ,~~~a_{13}^{\tau_1^{-1}} = c_{12} a_{13} c_{12}^{-1},~~~a_{23}^{\tau_1^{-1}} = c_{12} a_{23}c_{12}^{-1},
$$
$$
c_{12}^{\tau_1^{-1}} = c_{12},~~~c_{13}^{\tau _1^{-1}} = c_{12}{c_{13}}{c_{12}^{-1}},~~~c_{23}^{{\tau _1^{-1}}} = c_{12}{c_{23}}{c_{12}^{-1}},
$$

-- action of $\tau_2^{-1}$:

$$
a_{12}^{\tau _2^{-1}} = c_{23}{a_{12}}{c_{23}^{-1}},~~~
a_{13}^{\tau _2^{-1}} = c_{23}{a_{13}}c_{23}^{-1},~~~a_{23}^{\tau _2^{-1}} = c_{23} a_{23}c_{23}^{-1},
$$

$$
c_{12}^{\tau _2^{-1}} = c_{23}{c_{12}}{c_{23}^{-1}},~~~c_{13}^{\tau _2^{-1}} =  c_{23} c_{13} c_{23}^{-1},~~~
c_{23}^{\tau_2^{-1}} = c_{23},
$$

-- action of $\sigma_1$:

$$
a_{12}^{\sigma_1} = a_{12},~~~a_{13}^{\sigma_1} = a_{13} a_{23} a_{13}^{-1},~~~ a_{23}^{\sigma_1} = a_{13},
$$
$$
c_{12}^{\sigma_1} = c_{12},~~~c_{13}^{{\sigma _1}} = a_{12}^{ - 1}{c_{23}}a_{12},~~~ c_{23}^{\sigma _1} = c_{13},
$$

-- action of $\sigma_2$:

$$
a_{12}^{\sigma_2} = a_{23}^{-1} a_{13} a_{23},~~~a_{13}^{\sigma_2} = a_{12},~~~ a_{23}^{\sigma_2} = a_{23},
$$
$$
c_{12}^{{\sigma _2}} = a_{12}{c_{13}}a_{12}^{ - 1},~~~c_{13}^{{\sigma _2}} = {c_{12}},~~~ c_{23}^{\sigma_2} = c_{23},
$$

-- action of $\tau_1$:

$$
a_{12}^{\tau_1} = a_{12},~~~a_{13}^{\tau_1} = c_{12}^{-1} a_{13} c_{12},~~~a_{23}^{\tau_1} = c_{12}^{-1} a_{23}c_{12},
$$
$$
c_{12}^{\tau_1} = c_{12},~~~c_{13}^{{\tau _1}} = c_{12}^{ - 1}{c_{13}}{c_{12}},~~~c_{23}^{{\tau _1}} = c_{12}^{ - 1}{c_{23}}{c_{12}},
$$

-- action of $\tau_2$:

$$
a_{12}^{\tau _2} = c_{23}^{ - 1}{a_{12}}{c_{23}},~~~
a_{13}^{\tau _2} = c_{23}^{ - 1}{a_{13}}c_{23},~~~a_{23}^{\tau _2} = c_{23}^{ - 1}a_{23}c_{23},
$$

$$
c_{12}^{\tau _2} = c_{23}^{ - 1}{c_{12}}{c_{23}},~~~c_{13}^{\tau _2} =  c_{23}^{-1} c_{13} c_{23},~~~
c_{23}^{\tau_2} = c_{23}.
$$

\end{proposition}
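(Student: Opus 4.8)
The starting point is that $ST_3$ is by definition the kernel of the homomorphism $\pi \colon SB_3 \to S_3$, hence a normal subgroup of $SB_3$. Therefore conjugation by any generator $g \in \{\sigma_1,\sigma_2,\tau_1,\tau_2\}$ or its inverse is an automorphism of $ST_3$, so each conjugate $x^{g} := g^{-1} x g$ of a generator $x \in \{a_{12},a_{13},a_{23},c_{12},c_{13},c_{23}\}$ lies again in $ST_3$ and must be rewritable as a word in these six generators. (The convention $x^{g} = g^{-1} x g$ is the one under which the asserted formulas hold; for example $a_{23}^{\sigma_1} = \sigma_1^{-1}\sigma_2^{2}\sigma_1 = a_{13}$.) The plan is to check each formula by substituting the definitions $a_{12} = \sigma_1^{2}$, $a_{13} = \sigma_2\sigma_1^{2}\sigma_2^{-1}$, $a_{23} = \sigma_2^{2}$, $c_{12} = \tau_1$, $c_{13} = \sigma_2\tau_1\sigma_2^{-1}$, $c_{23} = \tau_2$, carrying out the conjugation, and reducing by the defining relations of $SB_3$.

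A large part of the table is essentially free. Since $\tau_1 = c_{12}$ and $\tau_2 = c_{23}$ already belong to $ST_3$, conjugation by $\tau_1^{\pm 1}$ and $\tau_2^{\pm 1}$ is an inner automorphism of $ST_3$; hence every entry of the four $\tau_i^{\pm 1}$-blocks is literally $c_{12}^{\mp 1} x\, c_{12}^{\pm 1}$ or $c_{23}^{\mp 1} x\, c_{23}^{\pm 1}$, and the only work is to record the few commutations that simplify some of them (from $\sigma_1\tau_1 = \tau_1\sigma_1$ one gets $a_{12}^{\tau_1} = a_{12}$ and $c_{12}^{\tau_1} = c_{12}$, and symmetrically for $\tau_2$). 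The genuine content therefore lies in the four $\sigma_i^{\pm 1}$-blocks.

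For those I would first record the handful of identities used repeatedly. From the braid relation $\sigma_1\sigma_2\sigma_1 = \sigma_2\sigma_1\sigma_2$ one gets $\sigma_1^{-1}\sigma_2\sigma_1 = \sigma_2\sigma_1\sigma_2^{-1}$ and $\sigma_1\sigma_2\sigma_1^{-1} = \sigma_2^{-1}\sigma_1\sigma_2$ (together with the two identities obtained by swapping the indices), which in particular give the convenient alternative form $a_{13} = \sigma_1^{-1}\sigma_2^{2}\sigma_1$; and from the mixed relations $\sigma_1\sigma_2\tau_1 = \tau_2\sigma_1\sigma_2$ and $\sigma_2\sigma_1\tau_2 = \tau_1\sigma_2\sigma_1$ one obtains the crucial alternative form $c_{13} = \sigma_2\tau_1\sigma_2^{-1} = \sigma_1^{-1}\tau_2\sigma_1$. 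With these in hand each entry follows from a short rewriting: for instance $a_{12}a_{13}a_{12}^{-1} = \sigma_1^{2}(\sigma_1^{-1}\sigma_2^{2}\sigma_1)\sigma_1^{-2} = \sigma_1\sigma_2^{2}\sigma_1^{-1} = a_{23}^{\sigma_1^{-1}}$, and $c_{13}^{\sigma_1} = \sigma_1^{-1}(\sigma_1^{-1}\tau_2\sigma_1)\sigma_1 = a_{12}^{-1}c_{23}a_{12}$. I would further halve the labour by using that the $\sigma_i$- and $\sigma_i^{-1}$-blocks record mutually inverse automorphisms, so each pair can be derived from and cross-checked against the other; and the formulas giving the $\sigma_i^{\pm 1}$-action on $a_{12},a_{13},a_{23}$ alone are just the classical conjugation action of $B_3$ on its pure subgroup $P_3$.

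The main obstacle will be the entries that mix the $a$- and $c$-families, namely the conjugates of the $c_{ij}$ by $\sigma_i^{\pm 1}$, such as $c_{13}^{\sigma_1} = a_{12}^{-1}c_{23}a_{12}$, $c_{23}^{\sigma_1^{-1}} = a_{12}c_{13}a_{12}^{-1}$ and $c_{12}^{\sigma_2} = a_{12}c_{13}a_{12}^{-1}$. These are exactly the places where a mixed relation must be invoked to trade $\tau_i$ for $\tau_{i\pm 1}$ at the cost of a power of $\sigma$, and where careful bookkeeping of those $\sigma$-powers, together with attention to the conjugation convention, is needed to arrive at the stated normal form. Everything else is routine word manipulation in $SB_3$.
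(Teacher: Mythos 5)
Your proposal is correct and matches the paper's own argument in substance: both verify the table entry by entry by writing each conjugate as an explicit word in $\sigma_i,\tau_i$ and identifying it with a word in the $a_{ij},c_{ij}$ — the paper performs the identification via the Reidemeister--Schreier rewriting function applied to the already-computed values $S_{\lambda,a}$, while you reduce directly with the braid and mixed relations (and observe that the $\tau_i^{\pm 1}$-blocks are inner). The paper, too, only writes out a handful of representative entries, so your sketch is no less complete than the published proof.
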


\begin{proof}

Let us prove some formulas:

$$
\tau_2 c_{13} \tau_2^{-1} = \tau_2  \sigma_2  \tau_1 \sigma_2^{-1} \tau_2^{-1}= S_{1, \tau_2}
S_{\sigma_2, \tau_1} S_{1, \tau_2}^{-1} = c_{23}  c_{13} c_{23}^{-1},
$$

$$
\tau_1^{-1} a_{23} \tau_1 = \tau_1^{-1} \sigma_2 \sigma_2  \tau_1 = S_{1, \tau_1}^{-1}
S_{\sigma_2, \sigma_2} S_{1, \tau_1} = c_{12}^{-1} a_{23} c_{12},
$$

$$
\tau_1^{-1} c_{23} \tau_1 = \tau_1^{-1} \tau_2 \tau_1 = S_{1, \tau_1}^{-1}
S_{1, \tau_2}S_{1, \tau_1} = c_{12}^{-1} c_{23} c_{12},
$$

$$
\tau_1^{-1} a_{13} \tau_1 = \tau_1^{-1} \sigma_2 \sigma_1 \sigma_1 \sigma_2^{-1} \tau_1 = S_{1, \tau_1}^{-1}
S_{\sigma_2 \sigma_1, \sigma_1}S_{1, \tau_1} = c_{12}^{-1} a_{13} c_{12},
$$

$$
  \sigma _2{a_{13}}{\sigma_2^{ - 1}} = \sigma _2 {\sigma _2}{\sigma _1}{\sigma _1}{\sigma _2^{ - 1} }\sigma _2^{ - 1} = 
  S_{{\sigma _2},{\sigma _2}}S_{{\sigma _1},{\sigma _1}}S_{{\sigma _2},{\sigma _2}}^{ - 1} = a_{23} {a_{12}}a_{23}^{ - 1}, 
$$

$$
\sigma _1^{ - 1}c_{13}{\sigma _1} = \sigma _1^{ - 1}{\sigma _2}{\tau _1}{\sigma _2}^{ - 1} {\sigma _1}= S_{{\sigma _1},{\sigma _1}}^{ - 1}{S_{{\sigma _1}{\sigma _2},{\tau _1}}}{S_{{\sigma _1},{\sigma _1}}} = a_{12}^{ - 1}{c_{23}}{a_{12}},
$$

$$
 \sigma _1 {c_{13}}{\sigma _1^{ - 1} } = \sigma _1{\sigma _2}{\tau _1}\sigma _2^{ - 1}{\sigma _1^{ - 1}} = S_{{\sigma _1}{\sigma _2},{\tau _1}}= c_{23}.
$$

\end{proof}

The group $ST_3$ has the following presentation.

\begin{theorem} \label{t1}
The group $ST_3$ is generated by elements
$$a_{12},~~a_{13},~~a_{23},~~c_{12},~~c_{13},~~c_{23},$$
subject to the defining  relations: 
\begin{equation} a_{12} c_{12} =c_{12} a_{12}   
\,\,\,\,\,\,(see \,\,  Fig.~\ref{rel1}), \end{equation}
\begin{figure}[h]
\centering{
\includegraphics[totalheight=4.5cm]{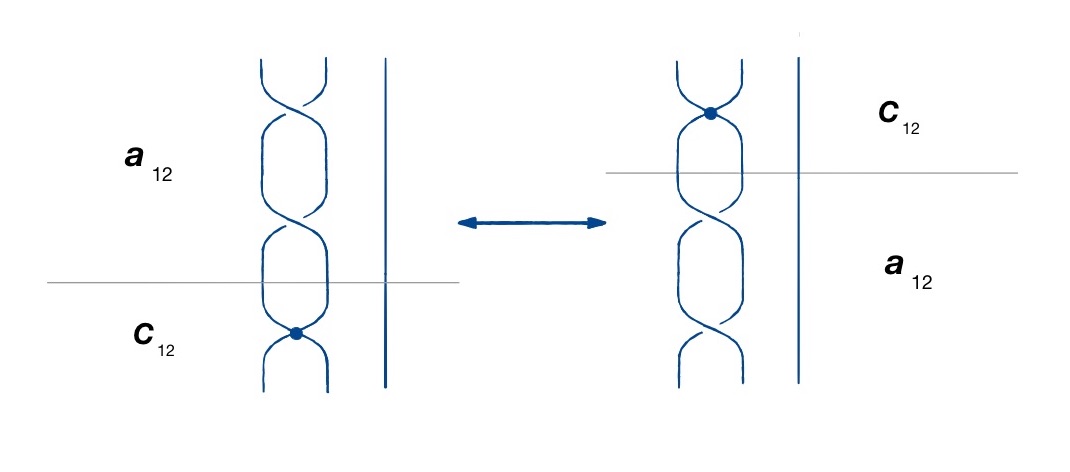}\\
\caption{Defining relation for $ST_3$: $a_{12} c_{12} =c_{12} a_{12} $} \label{rel1}
}
\end{figure}

\begin{equation} \label{eq2} a_{13} c_{13}= c_{13} a_{13} 
\,\,\,\,\,\,(see \,\,  Fig.~\ref{rel2}), \end{equation}
\begin{figure}[h]
\centering{
\includegraphics[totalheight=8.cm]{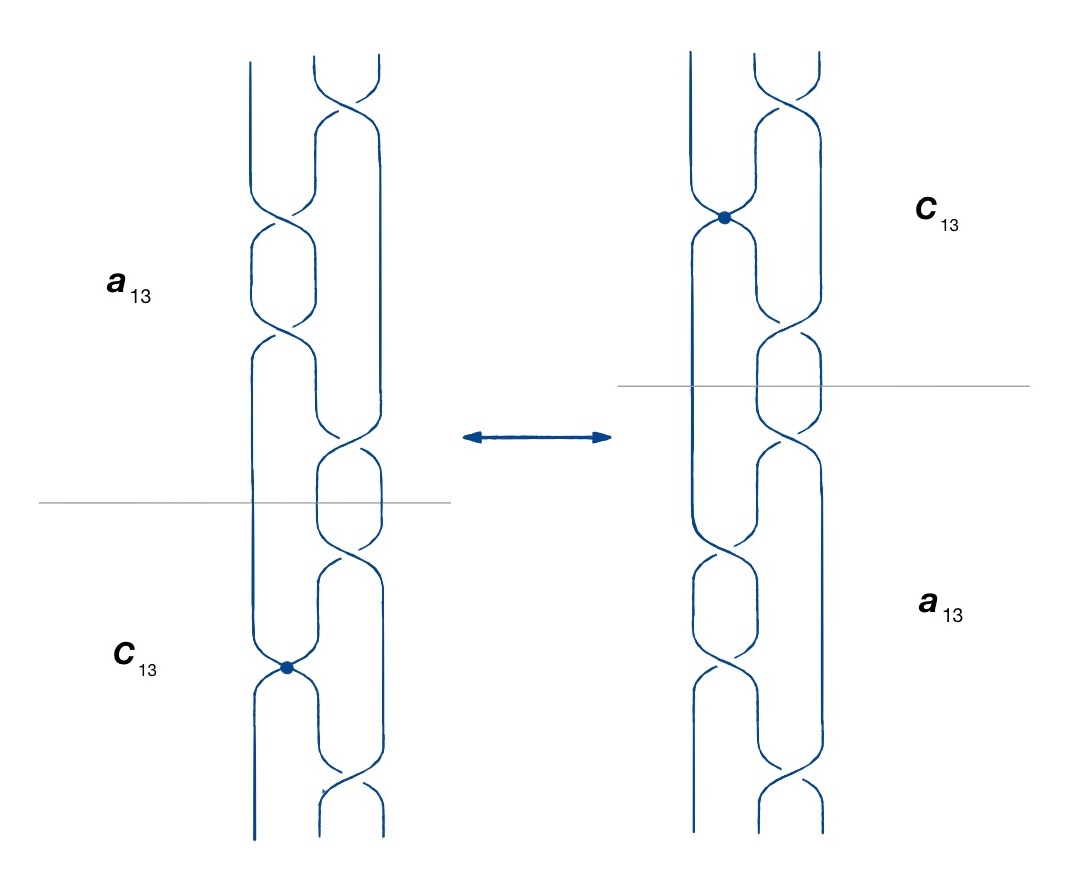}
\caption{Defining relation for $ST_3$: $a_{13} c_{13}= c_{13} a_{13} $} \label{rel2}
}
\end{figure}

\begin{equation} a_{23} c_{23}= c_{23} a_{23} 
\,\,\,\,\,\,(see \,\,  Fig.~\ref{rel3}), \end{equation}
\begin{figure}[h]
\centering{
\includegraphics[totalheight=4.5cm]{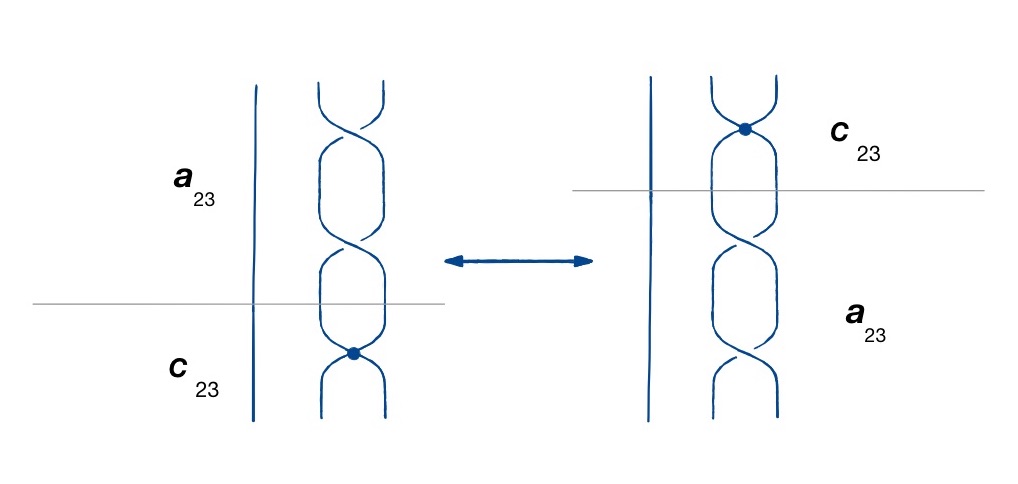}
\caption{Defining relation for $ST_3$: $a_{23} c_{23}= c_{23} a_{23}$} \label{rel3}
}
\end{figure}

\begin{equation} a_{12} a_{13} a_{12}^{-1}= a_{23}^{-1} a_{13} a_{23} 
\,\,\,\,\,\,(see \,\,  Fig.~\ref{rel4}), \end{equation}
\begin{figure}[h]
\centering{
\includegraphics[totalheight=9.cm]{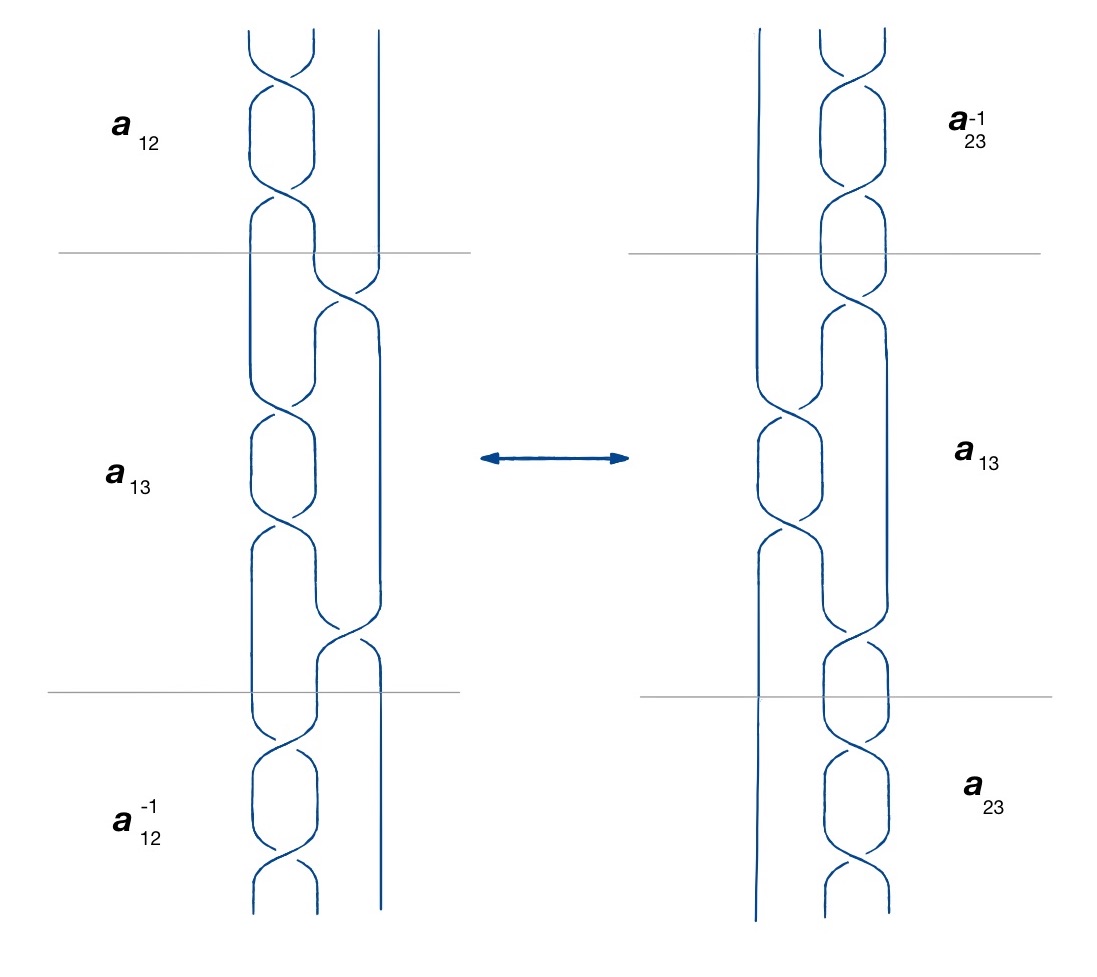}
\caption{Defining relation for $ST_3$: $ a_{12} a_{13} a_{12}^{-1}= a_{23}^{-1} a_{13} a_{23}$} \label{rel4}
}
\end{figure}

\begin{figure}[h]
\centering{
\includegraphics[totalheight=7.cm]{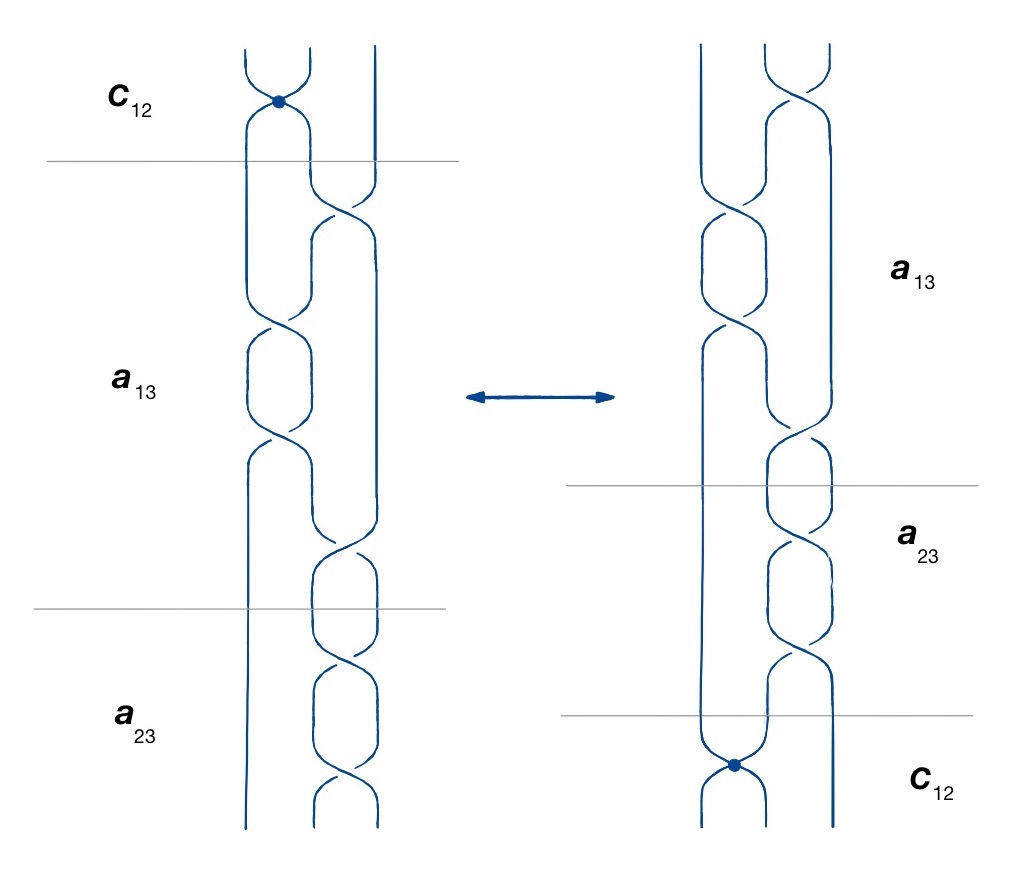}
\caption{Defining relation for $ST_3$: $c_{12} a_{13}a_{23} c_{12}^{-1}=a_{13} a_{23}$} \label{rel6}
}
\end{figure}

\begin{equation} c_{12} a_{13}a_{23} c_{12}^{-1}=a_{13} a_{23}
\,\,\,\,\,\,(see \,\,  Fig.~\ref{rel6}), \end{equation}

\begin{equation}  \label{r7} a_{12}c_{13} a_{12}^{-1} =a_{23}^{-1} c_{13} a_{23} 
\,\,\,\,\,\,(see \,\,  Fig.~\ref{rel7}),\end{equation}
\begin{figure}[h]
\centering{
\includegraphics[totalheight=7.cm]{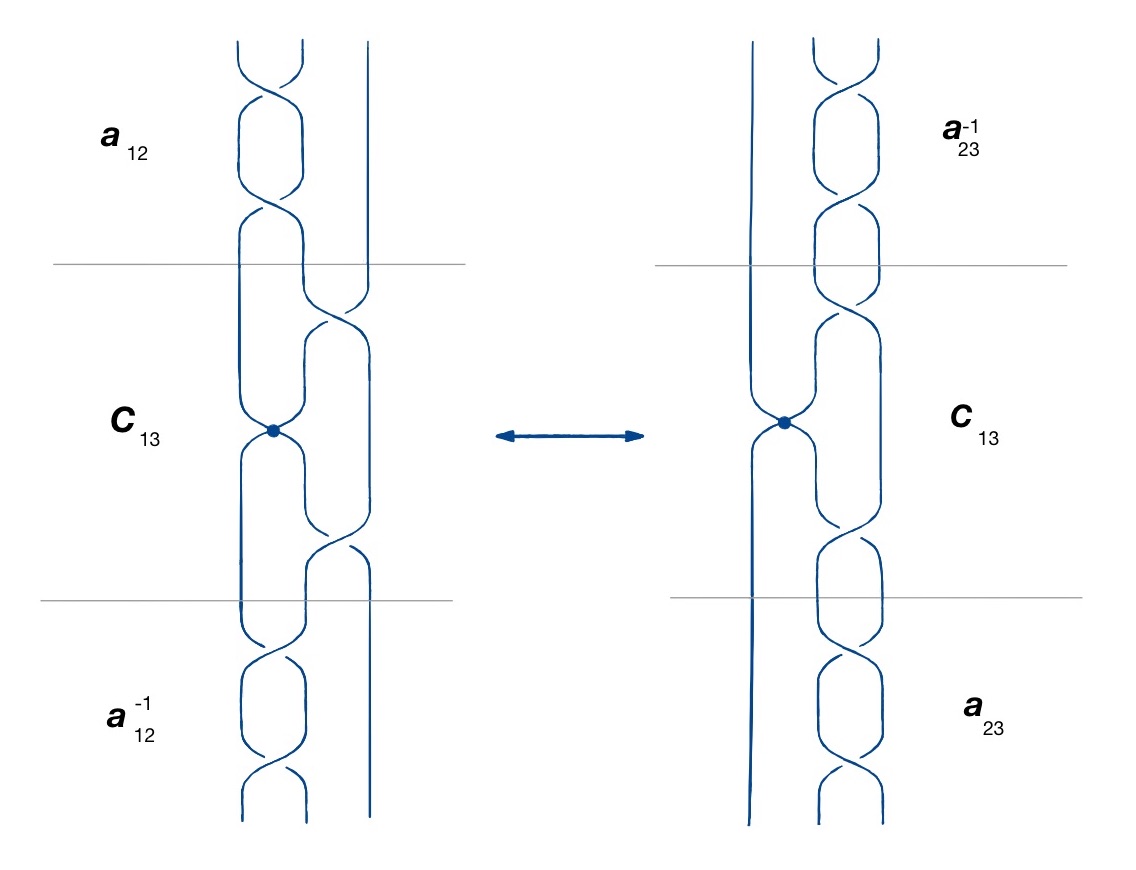}
\caption{Defining relation for $ST_3$: $a_{12}c_{13} a_{12}^{-1} =a_{23}^{-1} c_{13} a_{23}$} \label{rel7}
}
\end{figure}

\begin{equation} a_{12}^{-1} a_{23}a_{12}= a_{13} a_{23}a_{13}^{-1} 
\,\,\,\,\,\,(see \,\,  Fig.~\ref{rel5}), \end{equation}
\begin{figure}[h]
\centering{
\includegraphics[totalheight=10.cm]{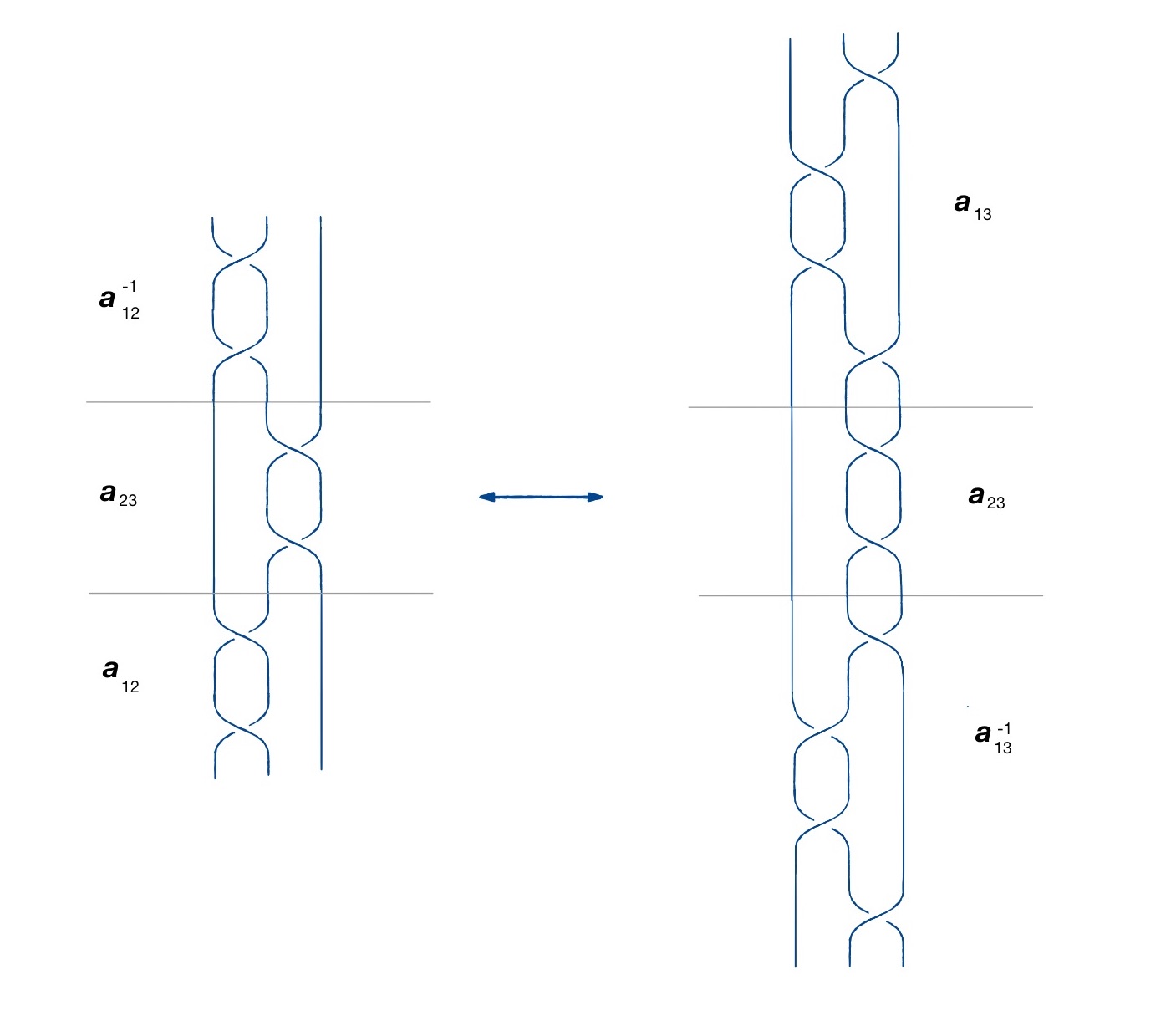}
\caption{Defining relation for $ST_3$: $ a_{12}^{-1} a_{23}a_{12}= a_{13} a_{23}a_{13}^{-1} $} \label{rel5}
}
\end{figure}

\begin{equation} a_{12}^{-1} c_{23} a_{12} =a_{13} c_{23} a_{13}^{-1}
\,\,\,\,\,\,(see \,\,  Fig.~\ref{rel8}).\end{equation}
\begin{figure}[h]
\centering{
\includegraphics[totalheight=9.cm]{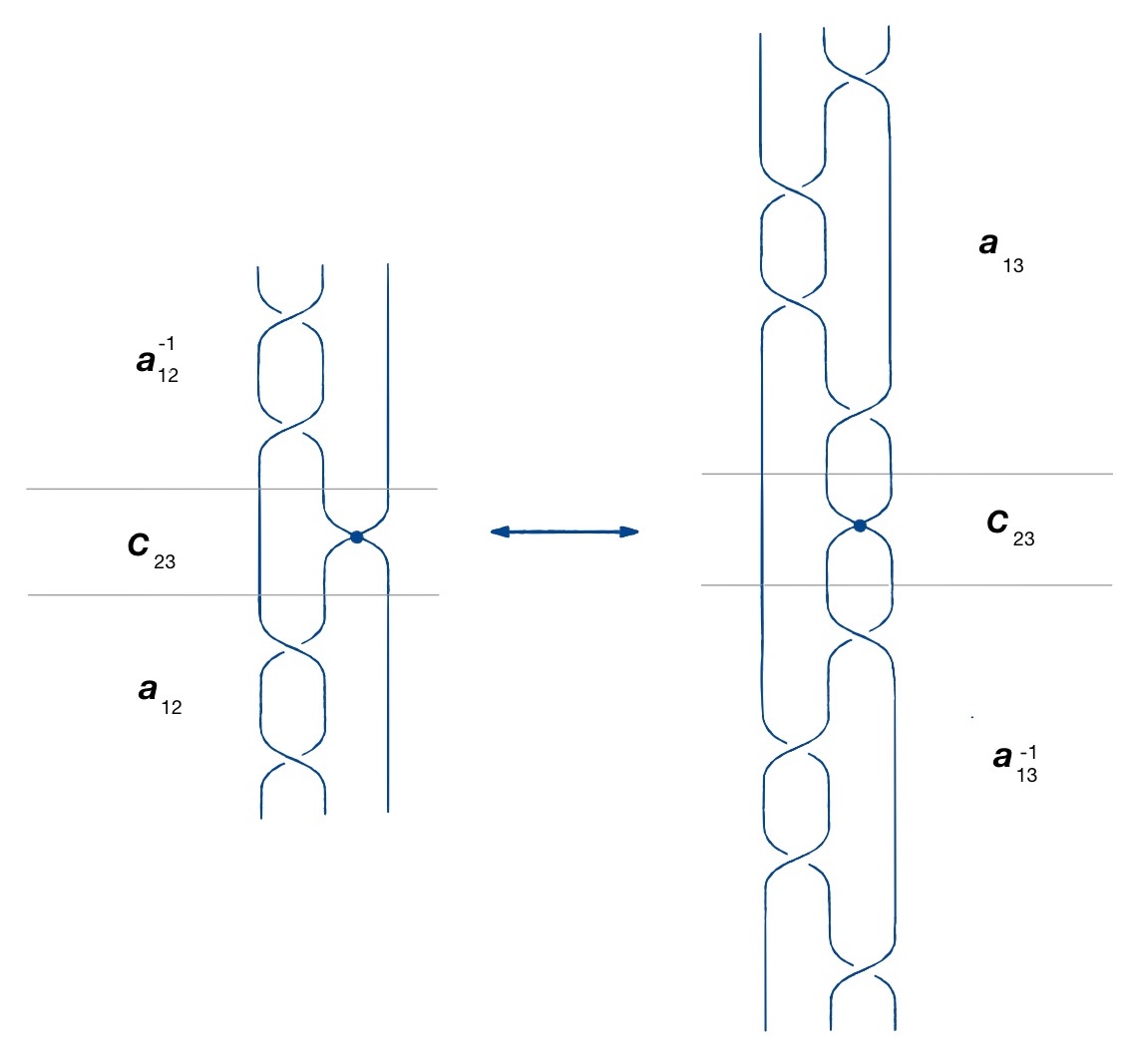}
\caption{Defining relation for $ST_3$: $a_{12}^{-1} c_{23} a_{12} =a_{13} c_{23} a_{13}^{-1}$} \label{rel8}
}
\end{figure}

\end{theorem}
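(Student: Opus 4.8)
The strategy is a Tietze-transformation argument built directly on top of the Reidemeister--Schreier data already assembled. By the explicit computation of the $S_{\lambda,a}$ and by \lemref{l1} through \lemref{l5}, the Reidemeister--Schreier procedure recalled earlier, applied with the Schreier transversal $\Lambda_3$, produces a presentation of $ST_3$ whose generators are the nontrivial symbols $S_{\lambda,a}$ and whose relators are exactly the rewritten words $\tau(\lambda r_\mu \lambda^{-1})$ collected in the lemmas. The intermediate lemma preceding the theorem then records, for each such symbol, its value as an explicit word in the six elements $a_{12},a_{13},a_{23},c_{12},c_{13},c_{23}$. My plan is to show that this raw presentation collapses, after eliminating all but these six generators, to precisely the eight relations claimed.

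First I would carry out the eliminations. Each of \lemref{l1}--\lemref{l5} isolates a block of removal relations of the form $S_{\lambda,a}=S_{\mu,b}$ or $S_{\lambda,a}=1$ (for instance $S_{\sigma_1,\tau_1}=S_{1,\tau_1}$, $S_{\sigma_2\sigma_1,\sigma_2}=1$, and $S_{\sigma_1\sigma_2\sigma_1,\sigma_1}=S_{\sigma_2,\sigma_2}$); read as Tietze moves, these delete one generator apiece. Combined with the defining equalities of the intermediate lemma, which express the remaining symbols such as $S_{\sigma_1,\tau_2}=a_{12}c_{13}a_{12}^{-1}$ and $S_{\sigma_1\sigma_2,\sigma_2}=a_{12}a_{13}a_{12}^{-1}$ as conjugates of the six chosen generators, this reduces the generating set to $\{a_{12},a_{13},a_{23},c_{12},c_{13},c_{23}\}$. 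I would then substitute these expressions into every surviving relation and simplify. The three commutator relations of \lemref{l1} become $a_{12}c_{12}=c_{12}a_{12}$, $a_{13}c_{13}=c_{13}a_{13}$, $a_{23}c_{23}=c_{23}a_{23}$; the nontrivial braid relations of \lemref{l2} give $a_{12}a_{13}a_{12}^{-1}=a_{23}^{-1}a_{13}a_{23}$ and, after moving $a_{12}^{-1}$ across, $a_{12}^{-1}a_{23}a_{12}=a_{13}a_{23}a_{13}^{-1}$; the third relation of \lemref{l4} collapses to $c_{12}a_{13}a_{23}c_{12}^{-1}=a_{13}a_{23}$; and the relations of \lemref{l5} yield $a_{12}c_{13}a_{12}^{-1}=a_{23}^{-1}c_{13}a_{23}$ together with $a_{12}^{-1}c_{23}a_{12}=a_{13}c_{23}a_{13}^{-1}$. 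These are exactly the eight relations of the statement.

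The substantive part is verifying that nothing further survives, that is, that every remaining rewritten relation is a consequence of these eight. Here I expect the main obstacle to be the bookkeeping: several relations from \lemref{l3}, \lemref{l4}, and \lemref{l5} look new but become tautologies once the conjugate expressions are inserted. For example $S_{\sigma_1\sigma_2,\sigma_2}S_{\sigma_1,\tau_2}=S_{\sigma_1,\tau_2}S_{\sigma_1\sigma_2,\sigma_2}$ reduces, via relation \eqnref{eq2}, back to $a_{13}c_{13}=c_{13}a_{13}$, and the final relation of \lemref{l5} merely repeats $a_{12}^{-1}c_{23}a_{12}=a_{13}c_{23}a_{13}^{-1}$. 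I would organize this as a table matching each rewritten relator to the member of the eight-relation list that proves it, using relation \eqnref{r7} and its $a_{23}$-analogue to cancel the conjugating factors $a_{12}^{\pm1},a_{23}^{\pm1}$. Since each elimination is a legitimate Tietze move and each discarded relation is derivable from the eight retained ones, the resulting presentation is equivalent to the Reidemeister--Schreier presentation of $ST_3$, which proves the theorem; the accompanying figures furnish an independent geometric check of the eight relators.
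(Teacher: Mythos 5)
Your proposal is correct and follows essentially the same route as the paper: the paper's ``proof'' of \thmref{t1} is precisely the Reidemeister--Schreier computation of the $S_{\lambda,a}$, the elimination relations of Lemmas \ref{l1}--\ref{l5} used as Tietze moves, the identification of the surviving symbols with $a_{ij}$, $c_{ij}$, and the observation (recorded in the summary list before the theorem) that every remaining rewritten relator reduces to one of the eight stated relations. Your attributions check out --- the three commutators come from \lemref{l1}, relations \eqnref{rel4}/\eqnref{rel5} from \lemref{l2}, \eqnref{rel6} from \lemref{l4}, and \eqnref{r7}/\eqnref{rel8} from \lemref{l5}, with the rest collapsing to duplicates or tautologies --- so your plan reproduces the paper's argument.
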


\section{Structure of $ST_3$} \label{pd}

Some decomposition of $ST_3$ gives the following

\begin{theorem}  \label{t4.1} The group $ST_3$ is the semi-direct product of the normal subgroup
$$H=\langle a_{13} , a_{23} , c_{12}, c_{13}, c_{23} \ |  \ a_{13} c_{13}= c_{13} a_{13}, \ a_{23} c_{23}= c_{23} a_{23}, \ c_{12} a_{13}a_{23} c_{12}^{-1}=a_{13} a_{23}\rangle.$$ 
and the infinite cyclic group
 $U_2=\langle a_{12} \rangle$.

The group $H$ is an HNN extension of 
$$\Z^2 \ast \Z^2 \simeq \langle a_{13}, c_{23}, c_{13}, c_{23} \ | \ a_{13} c_{13}= c_{13} a_{13}, \ a_{23} c_{23}= c_{23} a_{23}\rangle,$$ 
with stable letter $c_{12}$, associated subgroups $A = B = \langle a_{13} a_{23} \rangle$ and identity isomorphism $A \to B$.
\end{theorem}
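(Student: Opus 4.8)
The plan is to reorganize the eight defining relations of \thmref{t1} into the shape of a semidirect-product presentation, and to read the HNN structure of $H$ straight off its three relations.

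I would first dispose of the HNN claim, which is essentially definitional. Among the generators of $H$, the four letters $a_{13},c_{13},a_{23},c_{23}$ together with the two relations $a_{13}c_{13}=c_{13}a_{13}$ and $a_{23}c_{23}=c_{23}a_{23}$ present the subgroup $K=\langle a_{13},c_{13}\rangle\ast\langle a_{23},c_{23}\rangle\cong\Z^2\ast\Z^2$. The only remaining generator is $c_{12}$ and the only remaining relation is $c_{12}(a_{13}a_{23})c_{12}^{-1}=a_{13}a_{23}$, which is exactly the stable-letter relation of an HNN extension of $K$ with stable letter $c_{12}$, associated subgroups $A=B=\langle a_{13}a_{23}\rangle$, and identity isomorphism $A\to B$. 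The only genuine point to check is that $A\cong\Z$, i.e.\ that $a_{13}a_{23}$ has infinite order in $K$; this is immediate, since in a free product a product of two nontrivial elements from distinct free factors has infinite order. Hence $H$ is the asserted HNN extension.

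For the semidirect decomposition I would use the homomorphism $\psi\colon ST_3\to\Z$ defined on generators by $\psi(a_{12})=1$ and $\psi(a_{13})=\psi(a_{23})=\psi(c_{12})=\psi(c_{13})=\psi(c_{23})=0$. A one-line check shows each of the eight relations of \thmref{t1} is balanced in $a_{12}$ (every relation has equal $a_{12}$-exponent sum on both sides), so $\psi$ is well defined, and it is clearly onto. I would next verify that $H$ is normal: the relations $a_{12}c_{12}a_{12}^{-1}=c_{12}$, $a_{12}a_{13}a_{12}^{-1}=a_{23}^{-1}a_{13}a_{23}$ and $a_{12}c_{13}a_{12}^{-1}=a_{23}^{-1}c_{13}a_{23}$ give $a_{12}(\cdot)a_{12}^{-1}$ on three generators, while $a_{12}^{-1}a_{23}a_{12}=a_{13}a_{23}a_{13}^{-1}$ and $a_{12}^{-1}c_{23}a_{12}=a_{13}c_{23}a_{13}^{-1}$ give $a_{12}^{-1}(\cdot)a_{12}$ on the remaining two; solving these shows $a_{12}^{\pm1}Ha_{12}^{\mp1}=H$, so $H\trianglelefteq ST_3$. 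Since $H$ is normal and $ST_3=\langle H,a_{12}\rangle$, every element has the form $h\,a_{12}^{n}$, whence $\psi(h\,a_{12}^{n})=n$ and $\ker\psi=H$. The sequence $1\to H\to ST_3\xrightarrow{\psi}\Z\to1$ therefore splits via $1\mapsto a_{12}$, which also shows $a_{12}$ has infinite order, and we obtain $ST_3=H\rtimes U_2$ with $U_2=\langle a_{12}\rangle\cong\Z$.

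It remains to confirm that the normal subgroup $\ker\psi$ carries exactly the three-relator presentation attributed to $H$, rather than some proper quotient of it. For this I would let $\alpha$ be conjugation by $a_{12}$ and check that $\alpha$ is an \emph{automorphism} of $H$: the same five conjugation relations used above, once solved for all of $a_{12}^{\pm1}(\cdot)a_{12}^{\mp1}$, define mutually inverse endomorphisms $\alpha,\alpha^{-1}$ of $H$, and one verifies that each preserves the three relations of $H$. Granting $\alpha\in\Aut(H)$, the group $H\rtimes_\alpha\langle a_{12}\rangle$ has the standard presentation consisting of the three relations of $H$ together with the five relations $a_{12}g\,a_{12}^{-1}=\alpha(g)$; Tietze transformations identify this presentation with that of \thmref{t1}, giving $ST_3\cong H\rtimes_\alpha\Z$ and thus the stated presentation of $H$. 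The main obstacle is precisely this automorphism verification: the relations supply $a_{12}$-conjugation for $a_{13},c_{13},c_{12}$ but only $a_{12}^{-1}$-conjugation for $a_{23},c_{23}$, so one must solve inside $H$ for the missing conjugates and confirm invertibility — if $\alpha$ were merely an injective endomorphism one would land on an ascending HNN extension instead of a semidirect product, so this is the step that genuinely distinguishes the two.
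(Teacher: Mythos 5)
Your proposal is correct and follows essentially the same route as the paper: an epimorphism $\psi\colon ST_3\to\langle a_{12}\rangle$ killing the other five generators, normality of $H$ read off the conjugation relations, the presentation of $H=\ker\psi$ obtained by checking that $a_{12}$-conjugation respects the three $a_{12}$-free relations, and the HNN structure read directly from that presentation. Your explicit verification that conjugation by $a_{12}$ is an automorphism (not merely an injective endomorphism) of $H$ is precisely the content the paper compresses into ``it is not difficult to see that all these relations are equivalent to our three relations.''
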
 

\begin{proof} Let $U_2$ be the infinite cyclic group generated by $a_{12}$. 
Define an epimorphism $\psi : ST_3 \to U_2$, by the rules
$$
\psi(a_{12}) = a_{12},~~ \psi(a_{13}) = \psi(a_{23}) = \psi(c_{12}) = \psi(c_{13}) = \psi(c_{23}) = 1.
$$
The kernel  $Ker(\psi)$ is the normal closure of the subgroup $H = \langle a_{13}, a_{23}, c_{12}, c_{13}, c_{23} \rangle$. From the defining relations of $ST_3$ follows that $H$ is normal in $ST_3$ and hence is equal to its normal closure. To find defining relations of $H$ we have to take relations
$$
a_{13} c_{13} = c_{13} a_{13},~~a_{23} c_{23} = c_{23} a_{23},~~c_{12}^{-1} (a_{13} a_{23}) c_{12} = a_{13} a_{23},
$$
and add all relations which we get after conjugations by $a_{12}^k$, $k \in \mathbb{Z}$. But it is not difficult to see that all these relations are equivalent to our three relations. Hence, $H$ has the presentation from theorem. 

The second part of the theorem follows from the definition of HNN-extension.
\end{proof}

\begin{theorem}
 $ST_3$ is isomorphic to $SP_3$.
\end{theorem}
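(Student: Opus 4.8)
The plan is to prove the isomorphism by directly comparing the finite presentation of $ST_3$ obtained in \thmref{t1} with the presentation of $SP_3$ recorded by Bardakov and Kozlovskaya in \cite{bk}. First I would write down explicitly the generators and defining relations of $SP_3$ from \cite{bk}. Since $SP_3$ is the kernel of the homomorphism sending every $\sigma_i$ and $\tau_i$ to the transposition $(i,i+1)$, its natural generating set splits into a \emph{pure-braid} family $A_{12}, A_{13}, A_{23}$ (coming from $\sigma_i^2$ and its conjugates) and a \emph{singular} family coming from the $\tau_i$. Thus both $ST_3$ and $SP_3$ are presented on six generators, three of each type, which makes a generator-by-generator comparison natural.

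Next I would define a candidate isomorphism on generators, matching the $a_{ij}$ of $ST_3$ with the pure-braid generators of $SP_3$ and the $c_{ij}$ with the singular generators, and then check that each of the eight relations of \thmref{t1} is carried to a relation that holds in $SP_3$, and conversely that each defining relation of $SP_3$ is carried back to a consequence of the relations of $ST_3$. Concretely, the three commuting relations $a_{ij}c_{ij}=c_{ij}a_{ij}$ and the conjugation relations such as \eqnref{r7} should map to the corresponding relations in the $SP_3$ presentation, and the two remaining braid-type relations should account for the rest. Because the assignment is a bijection on the two generating sets and the relation lists correspond under it, the induced homomorphisms in both directions are mutually inverse, yielding $ST_3 \cong SP_3$.

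The hard part will be reconciling the two relation lists, which are not written in identical form: the presentation in \cite{bk} records its relations in a different normal form, so before the match becomes transparent one typically must apply a short sequence of Tietze transformations to one of the presentations. The key point to verify carefully is that no relation is lost or gained in this rewriting, i.e. that after the substitution the eight relations of $ST_3$ and the defining relations of $SP_3$ generate the same normal subgroup of the free group on the six generators. As an independent consistency check one may compare structural invariants — for instance, the semidirect/HNN decomposition established in \thmref{t4.1} against the direct-product decomposition of $SP_3$ given in \cite{bk} — but the rigorous argument proceeds through the presentations, and once the bookkeeping above is complete the isomorphism follows at once.
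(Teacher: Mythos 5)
Your strategy coincides with the paper's: both proofs proceed by matching the presentation of $ST_3$ from \thmref{t1} against the presentation of $SP_3$ in \cite{bk} under the generator correspondence $a_{ij}\mapsto a_{ij}$, $c_{ij}\mapsto b_{ij}$. However, your proposal stops exactly where the actual proof begins. The entire mathematical content of the argument is the verification you defer to ``bookkeeping'': under the stated correspondence all but one of the relations coincide on the nose, and the one that does not — the relation $a_{12}^{-1} c_{23} a_{12} = a_{13} c_{23} a_{13}^{-1}$ of $ST_3$ versus the relation $a_{12} b_{23} a_{12}^{-1} = a_{23}^{-1} a_{13}^{-1} b_{23} a_{13} a_{23}$ of $SP_3$ — is not visibly the same relation, so one must \emph{derive} one from the other inside the group. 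You never identify this discrepancy, let alone resolve it, and without that step you have not shown the two relation sets generate the same normal subgroup of the free group, which is precisely what the theorem requires.

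The missing derivation is short but essential: conjugate the $ST_3$ relation by $a_{12}^{-1}$ to get $c_{23} = a_{13}^{a_{12}^{-1}}\, c_{23}^{a_{12}^{-1}}\, (a_{13}^{a_{12}^{-1}})^{-1}$; replace $a_{13}^{a_{12}^{-1}}$ by $a_{13}^{a_{23}} = a_{23}^{-1} a_{13} a_{23}$ using the relation $a_{12} a_{13} a_{12}^{-1} = a_{23}^{-1} a_{13} a_{23}$; conjugate both sides by $a_{13}^{a_{23}}$; and finally use $a_{23} c_{23} = c_{23} a_{23}$ to absorb the resulting inner conjugation, arriving at $c_{23}^{a_{12}^{-1}} = a_{23}^{-1} a_{13}^{-1} c_{23} a_{13} a_{23}$, which is the $SP_3$ relation. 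Note also that your framing slightly overstates the difficulty elsewhere: no Tietze transformations of the generating sets are needed, since the other seven relations already match literally; the only work is the single in-group manipulation above. As written, your proposal is a correct plan but not a proof.
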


\begin{proof}
We know a presentation for $SP_3$ from \cite[Theorem 3.9]{bk}.  We shall compare this presentation with that of $ST_3$ obtained above. 
Comparing the sets of relations for  $ST_3$ and  $SP_3$, we see that they are different by one relation. In $ST_3$ we have relation
$$
a_{12}^{-1} c_{23}a_{12}= a_{13}c_{23}a_{13}^{-1},
$$
but in $SP_3$ we have relation
$$
a_{12} b_{23}a_{12}^{-1} = a_{23}^{-1} a_{13}^{-1} b _{23 } a_{13}  a_{23}.
$$ 
Conjugating relation in $ST_3$ by $a_{12}^{-1}$ we get
$$
c_{23} = a_{13}^{a_{12}^{-1}} c _{23}^{a_{12}^{-1}} a_{13}^{-a_{12}^{-1}}.
$$
Using the defining relation of  $ST_3$ we have 
$$
c_{23} = a_{13}^{a_{23}} c _{23}^{a_{12}^{-1}} a_{13}^{-a_{23}}.
$$
Conjugating both sides of the last relation by $a_{13}^{a_{23}}$ we arrive to relation
$$
c_{23}^{a_{12}^{-1}} = a_{23}^{-1} a_{13}^{-1} (a_{23} c _{23} a_{23}^{-1}) a_{13} a_{23}.
$$
Since $a_{23}$ and $c_{23}$ are commute we have 
$$
c_{23}^{a_{12}^{-1}} = a_{23}^{-1} a_{13}^{-1}  c _{23}  a_{13} a_{23}.
$$
This relation is equivalent to relation in $SP_3$. Hence, the maps
$$
a_{ij} \mapsto a_{ij},~~~c_{ij}\mapsto b_{ij}
$$
define an isomorphism $ST_3 \to SP_3$.
\end{proof}

\bigskip 

Let us define some other decompositions of $ST_3$.

We know that $ST_3$ contains the pure braid group  $P_3 = \langle a_{12}, a_{13}, a_{23}  \rangle$ and $C_3 = \langle c_{12}, c_{13}, c_{23}  \rangle$. Define two maps
$$
\varphi_c : ST_3 \to P_3,~~\varphi_c(a_{ij}) = a_{ij},~~\varphi_c(c_{ij}) = e,
$$
$$
\varphi_a : ST_3 \to C_3,~~\varphi_a(a_{ij}) = e,~~\varphi_c(c_{ij}) = c_{ij}.
$$
From the defining relations of $ST_3$ follows that these maps define epimorphisms and we have two short exact sequences:
$$
1 \to Ker(\varphi_c) \to  ST_3 \to P_3 \to 1,
$$  
$$
1 \to Ker(\varphi_a) \to  ST_3 \to C_3 \to 1.
$$ 

It is easy to check that under $\varphi_a$ all relations of $ST_3$ go to the trivial relations. Hence, we have

\begin{proposition}
$C_3$ is the free group of rank 3.
\end{proposition}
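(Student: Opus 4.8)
The plan is to show that the homomorphism $\varphi_a : ST_3 \to C_3$, which sends every $a_{ij}$ to $e$ and each $c_{ij}$ to itself, kills every defining relation of $ST_3$, so that the induced presentation of the image $C_3 = \langle c_{12}, c_{13}, c_{23}\rangle$ has no relations among the three generators $c_{12}, c_{13}, c_{23}$. If this holds, then $C_3$ admits a presentation on three generators with no relations, which is precisely the free group of rank $3$.

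Concretely, I would run through the eight defining relations of $ST_3$ listed in \thmref{t1} and apply $\varphi_a$ to each. The relations fall into two groups. The first group, namely $a_{12}c_{12}=c_{12}a_{12}$, $a_{13}c_{13}=c_{13}a_{13}$, $a_{23}c_{23}=c_{23}a_{23}$, $a_{12}a_{13}a_{12}^{-1}=a_{23}^{-1}a_{13}a_{23}$, $a_{12}c_{13}a_{12}^{-1}=a_{23}^{-1}c_{13}a_{23}$, $c_{12}a_{13}a_{23}c_{12}^{-1}=a_{13}a_{23}$, $a_{12}^{-1}a_{23}a_{12}=a_{13}a_{23}a_{13}^{-1}$, and $a_{12}^{-1}c_{23}a_{12}=a_{13}c_{23}a_{13}^{-1}$, must each be checked to become a trivial identity in $C_3$. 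Since $\varphi_a$ sends every $a_{ij}$ to $e$, the key observation is that each relation, once all $a_{ij}$ are deleted, reduces to an identity of the form $c_{ij}=c_{ij}$ or $e=e$: for instance $a_{13}c_{13}=c_{13}a_{13}$ maps to $c_{13}=c_{13}$, while $c_{12}a_{13}a_{23}c_{12}^{-1}=a_{13}a_{23}$ maps to $c_{12}c_{12}^{-1}=e$, and $a_{12}^{-1}c_{23}a_{12}=a_{13}c_{23}a_{13}^{-1}$ maps to $c_{23}=c_{23}$. I would tabulate these eight checks explicitly.

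Once every defining relation is seen to be a consequence of the empty relation set after applying $\varphi_a$, the standard fact about presentations (a quotient presentation is obtained by adding the relations that generators of the kernel impose, but here none survive) shows that $C_3$ has the presentation $\langle c_{12}, c_{13}, c_{23} \mid \ \rangle$, i.e.\ the free group $F_3$. To be careful, I would note that $\varphi_a$ is a retraction onto the subgroup $C_3 \subseteq ST_3$, so $C_3$ is genuinely a quotient of the free group on $c_{12}, c_{13}, c_{23}$; combining surjectivity with the vanishing of all relations gives an isomorphism onto $F_3$.

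I do not anticipate a serious obstacle here, since the computation is entirely mechanical; the only point requiring mild care is the logical justification that ``all relations become trivial'' implies ``the image is free'' rather than merely ``the image is a quotient of $F_3$.'' The cleanest way to secure this is to exhibit an explicit inverse-type argument: the inclusion $C_3 \hookrightarrow ST_3$ followed by $\varphi_a$ is the identity on $C_3$, and separately the composite need not be used—instead one simply remarks that the presentation of $ST_3$ together with the Tietze transformations induced by $\varphi_a$ eliminate all $a_{ij}$ and all relations, leaving the free presentation on the $c_{ij}$. Thus the proof is complete with the single sentence that all relations of $ST_3$ map to trivial relations under $\varphi_a$, hence $C_3 \cong F_3$.
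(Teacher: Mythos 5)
Your proposal is correct and follows essentially the same route as the paper, which likewise observes that every defining relation of $ST_3$ is mapped to a trivial relation under $\varphi_a$ and concludes that $C_3$ is free of rank $3$; you simply make the eight checks and the retraction argument explicit where the paper says ``it is easy to check.'' No gap.
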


We can find a generating set of $Ker(\varphi_c)$. Recall that $U_3 = \langle  a_{13}, a_{23}  \rangle$ is a free group of rank 2 which is normal in $P_3$ and $P_3$ is a semi-direct product of $U_3$ and infinite cyclic group $U_2 = \langle  a_{12}  \rangle$. Denote by $M_1$ the set of reduced words in the alphabet $\{ a_{13}^{\pm 1}, a_{23}^{\pm 1} \}$ which stated with some power of $a_{13}$.  Denote by $M_2$ the set of reduced words in the alphabet $\{ a_{13}^{\pm 1}, a_{23}^{\pm 1} \}$ which stated with some power of $a_{23}$.  Denote by $M_3$ the subset of $M_2$ consist of the word  which do not have the form $a_{23}^{-1} a_{13}^{-1} u$, where $u \in U_3$.

\begin{proposition} \label{p5}
The kernel $Ker(\varphi_c)$ is generated by elements
$$
c_{12}^u, c_{13}^v, c_{23}^w,~\mbox{where}~u \in M_3, v \in M_2, w \in M_1.
$$
\end{proposition}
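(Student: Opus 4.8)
The plan is to apply the Reidemeister--Schreier method a second time, now to the subgroup $Ker(\varphi_c) \subset ST_3$, using the semi-direct product structure $P_3 = U_3 \rtimes U_2$ to organize the coset representatives. First I would fix a Schreier transversal for $Ker(\varphi_c)$ in $ST_3$. Since $\varphi_c$ surjects onto $P_3 = \langle a_{12}, a_{13}, a_{23}\rangle$ with kernel containing all the $c_{ij}$, a natural set of coset representatives is the set of reduced words in $\{a_{12}^{\pm 1}, a_{13}^{\pm 1}, a_{23}^{\pm 1}\}$ that represent elements of $P_3$; the freeness of $C_3 = \langle c_{12}, c_{13}, c_{23}\rangle$ established above means the $c$-generators contribute no relations that would collapse cosets.

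Next I would compute the Schreier generators $S_{\lambda, c_{ij}} = \lambda\, c_{ij}\,(\overline{\lambda c_{ij}})^{-1}$, where $\lambda$ ranges over the transversal. Because $\varphi_c(c_{ij}) = e$, each such generator has the form $c_{ij}^{\lambda} = \lambda c_{ij}\lambda^{-1}$, a conjugate of a single $c$-generator. The action formulas in \propref{p4.1}, together with the defining relations of \thmref{t1}, then let me rewrite any conjugate $c_{ij}^{\lambda}$ in terms of conjugates by the \emph{reduced} normal-form representative of $\lambda$ in $P_3$. The key reductions are the commuting relations $a_{13} c_{13} = c_{13} a_{13}$, $a_{23} c_{23} = c_{23} a_{23}$, and the relation $c_{12}\,(a_{13} a_{23})\,c_{12}^{-1} = a_{13} a_{23}$ from \thmref{t4.1}: these are exactly what force certain conjugating prefixes to be redundant and cut the full transversal down to the index sets $M_1$, $M_2$, $M_3$. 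Specifically, $c_{23}$ commuting with $a_{23}$ means its conjugates only depend on the $M_1$-part (words starting with a power of $a_{13}$), $c_{13}$ commuting with $a_{13}$ leaves dependence on $M_2$ (words starting with a power of $a_{23}$), and the relation involving $a_{13}a_{23}$ removes precisely the prefixes of the form $a_{23}^{-1} a_{13}^{-1} u$, leaving $M_3$ for $c_{12}$.

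The main obstacle I expect is the bookkeeping that identifies exactly which conjugates are redundant and thereby pins down the three index sets $M_1, M_2, M_3$ rather than merely giving some spanning set. One must argue carefully that, after using the commutation and the HNN-type relation to normalize every conjugating word, no further coincidences occur among the remaining generators; in other words, that the listed conjugates $c_{12}^u$, $c_{13}^v$, $c_{23}^w$ are genuinely distinct and cannot be further reduced. I would handle this by tracking the normal form of the conjugating element in the free group $U_3 = \langle a_{13}, a_{23}\rangle$ and checking that the stabilizing relations act only on a controlled suffix. A cleaner alternative that I would pursue in parallel is to use the semi-direct and HNN decompositions directly: since $Ker(\varphi_c)$ is the normal closure of $C_3$ and $P_3$ acts on it by the conjugation formulas, the orbit of each $c_{ij}$ under the $P_3$-action, computed modulo the stabilizing relations, yields precisely a set indexed by the quotient of $U_3$ by the relevant stabilizer, and these quotients are what the sets $M_1, M_2, M_3$ parametrize.
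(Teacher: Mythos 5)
Your proposal is essentially the paper's own argument: the paper skips the explicit Reidemeister--Schreier formalism and simply observes that $Ker(\varphi_c)$ is generated by the conjugates $c_{ij}^{w}$ with $w\in P_3$, uses the decomposition $P_3=U_3\rtimes U_2$ to reduce the conjugating word to an element of $U_3=\langle a_{13},a_{23}\rangle$, and then applies exactly the reduction formulas you identify ($c_{13}^{a_{13}^{\pm1}}=c_{13}$, $c_{23}^{a_{23}^{\pm1}}=c_{23}$, $c_{12}^{a_{13}^{\varepsilon}}=c_{12}^{a_{23}^{-\varepsilon}}$ and $c_{12}^{a_{23}^{-1}a_{13}^{-1}}=c_{12}$) to cut the index sets down to $M_3,M_2,M_1$ --- this is precisely the ``cleaner alternative'' you describe at the end. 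One small remark: the proposition asserts only that these elements \emph{generate} $Ker(\varphi_c)$, so the distinctness/irredundancy issue you flag as the main obstacle is not actually needed; whether the set is a free (hence irredundant) generating system is deliberately left open in the paper as a separate Question.
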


\begin{proof}
By the definition $Ker(\varphi_c)$ is generated by elements $c_{ij}^w$, where $w \in P_3$. From the structure of $P_3$ follows, that $w = {a_{12}^k} w'$ for some integer $k$ and $w' \in U_3$. Using the conjugation rules by elements $a_{ij}$, we can assume that $Ker(\varphi_c)$ is generated by elements $c_{ij}^{w'}$, where $w' \in U_3$. Using the formulas (for $\varepsilon = \pm 1$):
$$
c_{12}^{a_{23}^{-1} a_{13}^{-1}} = c_{12},~~c_{12}^{a_{13}^{\varepsilon}} = c_{12}^{a_{23}^{-\varepsilon}},~~c_{13}^{a_{13}^{\varepsilon}} = c_{13},~~c_{23}^{a_{23}^{\varepsilon}} = c_{23},
$$
we get the need set of generators.
\end{proof}

\begin{question}
Is it true that $Ker(\varphi_c)$ is a free group with the set of free generators constructed in Proposition \ref{p5}?
\end{question}

\begin{ack}
The probelm addressed in this paper grew out during discussions with Valeriy G. Bardakov. The authors would like to thank Bardakov for numerous discussions and encouragements. 

Kozlovskaya acknowledges support by the Russian Science Foundation (project 19-41-02005) during the work on Section 3 and by the Ministry of Science and Higher Education of Russia (agreement No. 075-02-2020-1479/1) for the work in Section 4. 

Gongopadhyay acknowledges support from the DST projects \hbox{DST/INT/RUS/RSF/P-19} and  MTR/2017/000355.
\end{ack}

\end{document}